\documentclass[12pt, twoside]{article}

\usepackage{amsmath,amsthm,amssymb}

\usepackage{times}
\usepackage{url}

\usepackage{enumerate}

\def\titlerunning#1{\gdef\titrun{#1}}

\makeatletter

\def\author#1{\gdef\autrun{\def\and{\unskip, }#1}\gdef\@author{#1}}

\def\address#1{{\def\and{\\\hspace*{18pt}}\renewcommand{\thefootnote}{}%
		
		\footnote {#1}}%
	
	\markboth{\autrun}{\titrun}}

\makeatother

\def\email#1{e-mail: #1}

\def\subjclass#1{{\renewcommand{\thefootnote}{}%
		
\footnote{\emph{Mathematics Subject Classification (2020):} #1}}}

\def\keywords#1{\par\medskip
	
	\noindent\textbf{Keywords.} #1}

\newtheorem{theorem}{\bf Theorem}
\newtheorem{lemma}[theorem]{\bf Lemma}
\newtheorem{cor}[theorem]{\bf Corollary}
\newtheorem{prop}[theorem]{\bf Proposition}

\newtheorem{example}[theorem]{\bf Example}
\newtheorem{remark}[theorem]{\bf Remark}

\begin{document}
	
	\baselineskip=17pt
	
	\author{Tim Alderson \and Simeon Ball}

	\titlerunning{Sets of subspaces with restricted hyperplane intersection numbers}
	
	\title{On sets of subspaces with restricted hyperplane intersection numbers}
	
	\date{}
	
	\maketitle
	
	\address{ 
		Simeon Ball \and Departament de Matem\`atiques, Universitat Polit\`ecnica de Catalunya, Barcelona, Spain. The author acknowledges the support of the Spanish Ministry of Science, Innovation and Universities grant PID2023-147202NB-I00.\\
		\email{simeon.michael.ball@upc.edu}
		\and
		Tim Alderson \and Department of Mathematics and Statistics, University of New Brunswick Saint John, Saint John, NB, Canada. The author acknowledges the support of the Natural Sciences and Engineering Research Council of Canada (NSERC), [funding reference number 2019-04103]\\
		Cette recherche a \'{e}t\'{e} financ\'{e}e par le Conseil de recherches en sciences naturelles et en g\'{e}nie du Canada (CRSNG), [num\'{e}ro de r\'{e}f\'{e}rence 2019-04103].\\
		\email{tim.alderson@unb.ca} \\
		29 March 2026.
	}
	
	\subjclass{51E20, 94B05, 05E30}
	
	\begin{abstract}
		Let $\mathcal{X}$ be a set of $(h-1)$-dimensional subspaces of $\mathrm{PG}(kh-1,q)$ with the property that every hyperplane contains at most $t$ elements of $\mathcal{X}$. We prove the upper bound $|\mathcal{X}| \leq (t-k+2)q^h + t$, and characterise the structure of $\mathcal{X}$ in the case of equality. We call spanning sets attaining this bound \emph{length-maximal}. For $k=3$, these sets are higher-dimensional analogues of maximal arcs; when $h=1$ they are precisely the maximal arcs of $\mathrm{PG}(2,q)$, which for $t<q$ exist if and only if $q$ is even and $t$ divides $q$. For $k=4$ and $q^h>2$, we show that any length-maximal set must satisfy $t = q^h+1$ and that every hyperplane is either a $t$-secant or a $1$-secant. Such sets exist for all $q$ and $h$, arising as field reductions of ovoids of $\mathrm{PG}(3,q^h)$. For $k \geq 5$ and $q^h>3$, no length-maximal set exists. The case $q^h=3$, that is $(q,h)=(3,1)$, is exceptional: there, examples arising from the ternary Golay code exist for $k=5$ and $k=6$, while none exist for $k\ge7$. In the language of additive codes, these results assert that additive codes over $\mathbb{F}_{q^h}$ attaining the natural Griesmer-type bound do not exist when the code dimension is $5$ or more and $q^h>3$, apart from these two sporadic $\mathbb{F}_3$ examples. We also determine the strongly regular graphs associated with length-maximal sets.
	\end{abstract}
	
	\keywords{maximal arc, additive code, two-weight code, strongly regular graph, projective space, subspaces, Griesmer bound}
	
	\section{Introduction}
	
	A {\em maximal arc} $\mathcal{X}$ in $\mathrm{PG}(2,q)$ is a set of points with the property that every line is incident with $0$ or $t$ points of $\mathcal{X}$. It is straightforward to prove that $|\mathcal{X}|=(t-1)q+t$ and that $t$ divides $q$. Denniston \cite{den} provided examples for $q$ even and all possible values of $t$. Further examples were discovered by Thas \cite{tha1,tha3}, Mathon \cite{mathoncon}, and Hamilton and Mathon \cite{HM2003}. When $q$ is odd, maximal arcs were shown not to exist by Ball, Blokhuis and Mazzocca \cite{BBM}; see also \cite{BB}. The parameter set $(t,q)=(3,9)$ was ruled out earlier by Cossu \cite{cos} and the parameter set $(t,q)=(3,3^{h})$ by Thas \cite{tha}.
	
	In this article we consider a natural generalisation of maximal arcs to higher dimensions. A maximal arc in $\mathrm{PG}(2,q^h)$ yields, by field reduction, a set of $(h-1)$-dimensional subspaces in $\mathrm{PG}(3h-1,q)$ with the property that every hyperplane contains either $0$ or $t$ subspaces. This leads us to study sets $\mathcal{X}$ of $(h-1)$-dimensional subspaces in $\mathrm{PG}(kh-1,q)$ with the property that every hyperplane contains at most $t$ subspaces of $\mathcal{X}$.
	
	There is an equivalent formulation in terms of additive codes. An additive $[n,k,d]_{q}^h$ code $C$ is an $\mathbb{F}_q$-linear subspace of $\mathbb{F}_{q^h}^n$ of size $q^{kh}$ in which every non-zero vector has at least $d$ non-zero coordinates. Let $G$ be a $kh \times n$ matrix whose row span is $C$. By fixing a basis $\{e_1,\ldots,e_h\}$ for $\mathbb{F}_{q^h}$ over $\mathbb{F}_q$ we can write
	$$
	G=\sum_{j=1}^h e_jG_j,
	$$
	where $G_j$ is a $kh \times n$ matrix with entries from $\mathbb{F}_q$. For each coordinate $i \in \{1,\ldots,n\}$ we define a subspace $\pi_i$ as the span of the $i$-th column of $G_j$, $j \in \{1,\ldots,h\}$. Thus $\pi_i$ is a subspace of $\mathrm{PG}(kh-1,q)$ of projective dimension at most $h-1$. The property that every non-zero vector of $C$ has at least $d$ non-zero coordinates is equivalent to the property that every hyperplane of $\mathrm{PG}(kh-1,q)$ contains at most $t = n-d$ elements of the multi-set $\mathcal{X} = \{\pi_1,\ldots,\pi_n\}$.
	
	In this setting the length bound $|\mathcal{X}| \leq (t-k+2)q^h+t$ is a Griesmer-type bound for additive codes. Codes attaining this bound are additive few-weight codes; for $k=3$ they are two-weight codes whose non-zero weights are $n$ and $n-t$. An additive permutation on each coordinate produces an equivalent code containing the all-one vector.
	
	Although maximal arcs do not exist in planes of odd characteristic, Mathon \cite{DDHM2002} discovered an example of $21$ lines in $\mathrm{PG}(5,3)$ (equivalently, $k=3$, $h=2$, $q=3$, $t=3$) with the property that every hyperplane contains $0$ or $3$ lines. This example shows that a direct generalisation of the Ball--Blokhuis--Mazzocca nonexistence theorem to higher dimensions is not possible. To our knowledge it is the only known example in odd characteristic, and the uniqueness question remains open.
	
	The main results of this paper are as follows. Theorem~\ref{mainthm} establishes the bound $|\mathcal{X}| \leq (t-k+2)q^h+t$ and characterises equality. The divisibility condition $t-k+3 \mid q^h$ (Theorem~\ref{cor: divisibility}) provides a further necessary condition for length-maximal sets. Theorem~\ref{lem: k=4 maximal arc DNE} shows that for $k=4$ and $q^h>2$, any length-maximal set must satisfy $t=q^h+1$ and have no zero-secant hyperplanes. Such sets exist for all $q$ and $h$, as field reductions of ovoids of $\mathrm{PG}(3,q^h)$ (Example~\ref{ex: ovoid}). Theorem~\ref{lem: k=5 maximal arc DNE} shows that no length-maximal set exists for $k=5$ when $q^h>3$, and by induction this extends to all $k \ge 5$. The borderline case $(q,h)=(3,1)$, is genuinely exceptional in that length-maximal sets exist for $k=5$ and $k=6$ but not for $k\ge7$ (Example~\ref{ex: golay} and Proposition~\ref{prop: q3 k7}). In Section~\ref{sec: srg} we determine the strongly regular graphs associated with length-maximal sets. The case $q=2$ is discussed in Section~\ref{sec: remarks}.
	
	Throughout, $k \ge 2$, and dimension refers to projective dimension.
	
	\section{An upper bound on the size of $\mathcal{X}$}
	
	\begin{theorem} \label{mainthm}
		Let $\mathcal{X}$ be a multi-set of $(h-1)$-dimensional subspaces in $\mathrm{PG}(kh-1,q)$ with the property that every hyperplane contains at most $t$ subspaces of $\mathcal{X}$. Then
		\begin{equation} \label{eqn: maximal arc bound}
			|\mathcal{X}| \le (t-k+2)q^h+t,
		\end{equation}
		with equality if and only if
		\begin{enumerate}
			\item every set of $(k-1)$ elements of $\mathcal{X}$ spans a $((k-1)h-1)$-dimensional subspace, and \label{part: mainthm 1}
			\item every hyperplane contains either $0, 1, \ldots, k-3$ or $t$ elements of $\mathcal{X}$. \label{part: mainthm 2}
		\end{enumerate}
	\end{theorem}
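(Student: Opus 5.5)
We argue by induction on $k$, using a counting argument through a well-chosen subspace of codimension $h$.

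The base case is $k=2$. In $\mathrm{PG}(2h-1,q)$ we choose a hyperplane $H$ containing some element $\pi\in\mathcal{X}$; if $\mathcal{X}$ is empty there is nothing to prove. The hyperplanes through the $(h-1)$-space $\pi$ correspond to hyperplanes of the quotient $\mathrm{PG}(h-1,q)$. Every other element $\sigma\in\mathcal{X}$ (counted with multiplicity) either meets $\pi$ or not. We count pairs $(\sigma,H)$ with $\pi\subseteq H$ and $\sigma\subseteq H$. Each such $H$ contains at most $t-1$ elements other than $\pi$. A simpler route for $k=2$ is to choose an $(h-1)$-space disjoint from as much as possible; I would rather run a uniform argument for all $k$, as follows.

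For general $k$, choose a $((k-1)h-1)$-dimensional subspace $\Sigma$ spanned by $k-1$ elements of $\mathcal{X}$ in general position if possible, or more generally any $(kh-h-1)$-space $\Sigma$ containing as many elements of $\mathcal{X}$ as possible, say $s$ of them. The hyperplanes through $\Sigma$ number $(q^h-1)/(q-1)$. Each element $\pi\notin\Sigma$ spans with $\Sigma$ a subspace of codimension $h-\dim$, and is contained in exactly $(q^{h-j}-1)/(q-1)$ hyperplanes through $\Sigma$ when $\dim(\pi\cap\Sigma)=j-1$. Double counting gives
\[
\sum_{H\supseteq\Sigma}(|H\cap\mathcal{X}|-s)=\sum_{\pi\notin\Sigma}\frac{q^{h-j_\pi}-1}{q-1}\ge |\mathcal{X}\setminus\Sigma|,
\]
where $j_\pi\le h-1$ for $\pi\not\subseteq\Sigma$. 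The left side is at most $\frac{q^h-1}{q-1}(t-s)$. This alone is too weak, so the trick is to pick instead $\Sigma$ as a subspace of dimension $kh-h-1$ containing $k-2$ elements and then project from one element $\pi_0$: the quotient by $\pi_0$ is $\mathrm{PG}((k-1)h-1,q)$. Elements skew to $\pi_0$ project to $(h-1)$-spaces, and a hyperplane of the quotient containing $m$ projected elements lifts to a hyperplane through $\pi_0$ with at least $m+1$ elements. Hence the projected multiset (of elements skew to $\pi_0$) satisfies the hypothesis with $t-1$. By induction it has size at most $(t-1-(k-1)+2)q^h+t-1=(t-k+2)q^h+t-1$, so $|\mathcal{X}|\le (t-k+2)q^h+t$ plus the number of elements meeting $\pi_0$ nontrivially other than $\pi_0$ itself. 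The main obstacle is controlling these meeting elements. They project to subspaces of smaller dimension, and I would handle them by allowing lower-dimensional elements in the induction, counting each projected element in every quotient hyperplane containing it. A smaller element lies in more hyperplanes, which only strengthens the constraint, so the bound should still follow but with slack that must be zero at equality. At the base $k=2$ ($\mathrm{PG}(2h-1,q)$, bound $tq^h+t$) the count through a single element $\pi_0$ gives directly: each of the $q^h+1$ hyperplane-classes in a spread-like count… I will instead verify $k=2$ by counting, over all hyperplanes containing $\pi_0$, the incidences with the other elements. This yields $|\mathcal{X}|-1\le (t-1)(q^h+1)$ after weighting, which gives $tq^h+t-q^h$. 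That is stronger than needed, so the real base case is $k=1$ (trivial: $|\mathcal{X}|\le t$ fails unless…). I would therefore set the base at $k=2$ and check it by the weighted count.

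For equality, tightness at each step forces the following. Every element is skew to the projection centre $\pi_0$, for any choice of $\pi_0$; iterating over $k-2$ successive centres gives condition 1. Moreover every hyperplane through $\pi_0$ contains exactly $t$ or at most $k-3$ elements, by induction on the quotient's equality structure (0,\dots,k-4 or $t-1$, shifted by one). Together with hyperplanes missing every element, this gives condition 2. For the converse, conditions 1 and 2 make the hyperplane count through a $(k-2)$-tuple span exact, and the standard variance computation, using the counts of hyperplanes containing $0,1,2$ elements, forces equality.
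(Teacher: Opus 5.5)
\paragraph{The gap: the base case $k=2$.} This matters more than it looks. Once your inductive statement is strengthened to multisets of subspaces of dimension at most $h-1$ (the empty subspace allowed), the projection step gives exactly $|\mathcal{X}|-1\le (t-k+2)q^h+t-1$, because $(t-1)-(k-1)+2=t-k+2$. There is no leftover term for members meeting $\pi_0$. So the step only transports the bound down to $\mathrm{PG}(2h-1,q)$, and all of the counting happens at $k=2$.

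The count you propose there, over hyperplanes through a fixed member $\pi_0$, cannot see members skew to $\pi_0$. In $\mathrm{PG}(2h-1,q)$ such a member spans the whole space together with $\pi_0$, so it lies in none of those hyperplanes. The same slip occurs in your abandoned computation with $\Sigma$: a member meeting the codimension-$h$ space $\Sigma$ in a $(j-1)$-space lies in $(q^{j}-1)/(q-1)$ hyperplanes through $\Sigma$, not $(q^{h-j}-1)/(q-1)$, so skew members contribute nothing.

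In fact the inequality $|\mathcal{X}|-1\le (t-1)(q^h+1)$ you arrive at is false. A spread of $\mathrm{PG}(2h-1,q)$ has $t=1$ and $q^h+1$ members, which exceeds the value $1$ it allows. What is needed is a global double count. A subspace of dimension $d\le h-1$ lies in $(q^{2h-1-d}-1)/(q-1)\ge (q^h-1)/(q-1)$ of the $(q^{2h}-1)/(q-1)$ hyperplanes. Hence $|\mathcal{X}|\le t(q^h+1)$, with equality if and only if every member is an $(h-1)$-space and every hyperplane is a $t$-secant.

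\paragraph{How the argument closes, and its relation to the paper.} Use this base case and make the strengthened statement explicit: equality holds if and only if all members are $(h-1)$-spaces and conditions 1 and 2 hold. Then your scheme closes:
\begin{itemize}
\item If $\pi_0$ is smaller than an $(h-1)$-space, enlarge it to one before projecting.
\item Equality for $\mathcal{X}$ forces equality for the quotient. By induction, every other member is then an $(h-1)$-space skew to $\pi_0$. Using a different centre shows that $\pi_0$ itself is an $(h-1)$-space.
\item Conditions 1 and 2 then follow as you describe.
\end{itemize}

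Unrolled over $k-2$ centres, this is essentially the paper's proof, which does it in one step. The paper takes the span $\pi$ of $k-2$ members and double counts pairs $(x,H)$ with $\langle\pi,x\rangle\subseteq H$; this is precisely the $k=2$ count in the quotient at $\pi$. The paper's defects $e_1,e_2$ play the role of your lower-dimensional centres and projected members.

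For the converse you need no variance argument. There are $(q^{2h}-1)/(q-1)$ hyperplanes through such a $\pi$, and each is a $t$-secant by condition 2. By condition 1, each further member lies in exactly $(q^h-1)/(q-1)$ of them. So the same count is an identity, and it gives $|\mathcal{X}|-k+2=(t-k+2)(q^h+1)$.
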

	
	\begin{proof}
		Write $n=|\mathcal{X}|$. Let $x_1,\ldots,x_{k-2}$ be elements of $\mathcal{X}$ and let $\pi=\langle x_1,\ldots,x_{k-2} \rangle$. The dimension of $\pi$ is $(k-2)h-1-e_1$ for some $e_1 \ge 0$. Let $(k-1)h-1-e_1-e_2$ denote the maximum dimension of $\langle \pi, x \rangle$ over $x \in \mathcal{X} \setminus \{x_1,\ldots,x_{k-2}\}$.
		
		We count pairs $(x,H)$ where $x \in \mathcal{X} \setminus \{x_1,\ldots,x_{k-2}\}$ and $H$ is a hyperplane containing both $x$ and $\pi$. By hypothesis each such hyperplane contains at most $t-k+2$ elements of $\mathcal{X} \setminus \{x_1,\ldots,x_{k-2}\}$, so this count is at most
		$$
		\frac{q^{2h+e_1}-1}{q-1}(t-k+2).
		$$
		Since $(k-1)h-1-e_1-e_2$ is the maximum dimension of $\langle \pi, x \rangle$, each element of $\mathcal{X} \setminus \{x_1,\ldots,x_{k-2}\}$ lies in at least
		$$
		\frac{q^{h+e_1+e_2}-1}{q-1}
		$$
		hyperplanes through $\pi$, so the count is at least
		$$
		\frac{q^{h+e_1+e_2}-1}{q-1}(n-k+2).
		$$
		Combining these inequalities,
		$$
		n \le \frac{q^{2h+e_1}-1}{q^{h+e_1+e_2}-1}(t-k+2)+k-2.
		$$
		This bound is maximised when $e_1=e_2=0$, giving $n \le (t-k+2)q^h+t$.
		
		Equality requires $e_1=e_2=0$, which means every $(k-1)$ elements of $\mathcal{X}$ span a $((k-1)h-1)$-flat (part~\ref{part: mainthm 1} of Theorem~\ref{mainthm}), and every hyperplane containing $k-2$ elements of $\mathcal{X}$ contains exactly $t$ elements of $\mathcal{X}$ (part~\ref{part: mainthm 2} of Theorem~\ref{mainthm}).
	\end{proof}
	
	The bound \eqref{eqn: maximal arc bound} is called the \emph{maximal arc bound}, and a set $\mathcal{X}$ attaining it which spans $\mathrm{PG}(kh-1,q)$ is called \emph{length-maximal}. The spanning condition excludes the degenerate case $|\mathcal{X}|=t=k-2$, in which both conditions of Theorem~\ref{mainthm} are vacuous, and corresponds to the requirement that the associated additive code have size $q^{kh}$. A length-maximal $\mathcal{X}$ corresponds to an additive few-weight code over ${\mathbb F}_{q^h}$; by part~\ref{part: mainthm 2} of Theorem~\ref{mainthm} its non-zero weights lie in $\{\,n-t,\ n,\ n-1,\ \ldots,\ n-(k-3)\,\}$, and for $k=3$ it is a two-weight code with weights $n$ and $n-t$.
	
	\begin{lemma}\label{lem: no maximal MDS codes}
		If $\mathcal{X}$ is a length-maximal set of $(h-1)$-dimensional subspaces in $\mathrm{PG}(kh-1, q)$ with $q^h>2$ and $k\ge 4$, then $t\ge k$.
	\end{lemma}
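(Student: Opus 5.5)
The plan is to first pin down that $t\ge k-1$, and then to rule out $t=k-1$ by quotienting down to $k=4$ and looking for a contradiction there.

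\emph{Step 1: $t\ge k-1$.} Since $\mathcal{X}$ is length-maximal, Theorem~\ref{mainthm} applies with equality. Take any $k-1$ elements of $\mathcal{X}$ (there are enough, as $n=(t-k+2)q^h+t$ and $\mathcal{X}$ spans). By part~\ref{part: mainthm 1} they span a $((k-1)h-1)$-space, which lies in some hyperplane. That hyperplane contains at least $k-1>k-3$ elements, so by part~\ref{part: mainthm 2} it contains exactly $t$ elements. Hence $t\ge k-1$, and it remains to exclude $t=k-1$, i.e.\ $n=q^h+k-1$.

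\emph{Step 2: structure when $t=k-1$.} With $t=k-1$, no hyperplane contains $k$ elements, so any $k$ elements span $\mathrm{PG}(kh-1,q)$. I will quotient by the span of $k-4$ elements; by part~\ref{part: mainthm 1} this span has dimension $(k-4)h-1$, and the $k-1$ spanning property survives. This gives $q^h+3$ elements in $\mathrm{PG}(4h-1,q)$, any four spanning and every hyperplane containing at most $3$. So it suffices to treat $k=4$, $t=3$, $N:=q^h\ge 3$.

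\emph{Step 2a: the projections.}
\begin{itemize}
\item Quotienting by one further element $x$ gives $N+2$ elements of $\mathrm{PG}(3h-1,q)$ such that any three span the space.
\item Quotienting by two elements $x,y$ gives $N+1$ elements of $\mathrm{PG}(2h-1,q)$ that pairwise span the space. Each lies in $s(N+\dots)$ hyperplanes with $s=(q^h-1)/(q-1)$, and a count of incident pairs shows that every hyperplane contains exactly one of them. Since the elements are then pairwise disjoint, the $N+1$ projected elements form an $(h-1)$-spread.
\item Consequently, in the $\mathrm{PG}(3h-1,q)$ quotient, each hyperplane through an element contains exactly one further element. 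So every hyperplane there contains $0$ or $2$ elements: a hyperoval-type set.
\item Lifting back, every hyperplane of $\mathrm{PG}(4h-1,q)$ contains $0$, $1$ or $3$ elements, never $2$.
\end{itemize}

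\emph{Step 2b: counting.} Each triple lies in exactly $s$ hyperplanes and each pair in $(N+1)s$. From this, $T_3=\binom{N+3}{3}s$, $T_1=(N+3)s\,(N^2-N)/2$, and $T_0$ is determined as well.

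\emph{Step 3: the contradiction (main obstacle).} The naive counts above are consistent, so the contradiction must come from finer structure. I would try two arguments in order.
\begin{itemize}
\item \emph{Parity, giving $N$ even.} In the hyperoval-type configuration in $\mathrm{PG}(3h-1,q)$, take a $(3h-3)$-space $L$ in general position and count the elements meeting the $q+1$ hyperplanes through $L$. Each such hyperplane contains $0$ or $2$ elements, which should force $N+2$ even, hence $q$ even (the analogue of the hyperoval parity argument).
\item \emph{Ruling out even $N\ge 4$.} Here I would exploit that the $0/2$ property holds simultaneously for every projection from an element $x$. The plan is to take two hyperplanes $H,H'$ of $\mathrm{PG}(4h-1,q)$ with three elements each, sharing exactly one element, and count the elements in the pencil of hyperplanes through $H\cap H'$. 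These hyperplanes have $0$, $1$ or $3$ elements, and each of the remaining elements lies in at most one member of the pencil. Combining this with $N+3$ odd and the exclusion of $2$-element hyperplanes should give an arithmetic contradiction as soon as $q^h>2$. The case $q^h=2$ must escape, since $5$ points in $\mathrm{PG}(3,2)$ do occur.
\end{itemize}

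This final step is where I expect the real difficulty. If the pencil count does not close, the fallback is to mimic the classical proof that arcs in $\mathrm{PG}(3,q)$ have at most $q+1$ points for $q>3$, in its subspace/additive form.
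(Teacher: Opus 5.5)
\paragraph{Verdict: there is a genuine gap at the only non-trivial step.} Your Steps~1 and~2 are sound. The bound $t\ge k-1$ is immediate. Quotienting by $k-4$ elements correctly reduces the case $t=k-1$ to $k=4$, $t=3$, $n=q^h+3$. The spread structure, the ``hyperoval-type'' quotient and your counts $T_3$, $T_1$ are also correct. But the whole content of the lemma is the exclusion of $t=k-1$, and Step~3 does not achieve it.

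\paragraph{Why your counting cannot close.} The local counts are all consistent. Your own numbers give $T_0=sN(N-1)(N-2)/3\ge 0$; this is exactly \eqref{eqn: 4_0} at $t=3$, which is why the paper needs this lemma at all. The pencil count is consistent too.
\begin{itemize}
\item For $h=1$ and $q$ even, every point other than $x$ lies in exactly one plane of the pencil through $H\cap H'$. Each such plane contains $x$, so it has $1$ or $3$ points. The count then only says that the pencil contains $(q+2)/2$ three-point planes, which is no contradiction.
\item For $h\ge 2$ it is weaker still. An element meeting $H\cap H'$ in less than an $(h-2)$-space lies in no member of the pencil, so you only obtain an inequality.
\item Your parity step has the same defect. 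For $h\ge 2$, a generic element lies in no hyperplane through a $(3h-3)$-space $L$. The correct route to $2\mid q^h$ is Lemma~\ref{dividinglemma}, or Theorem~\ref{cor: divisibility} with $t-k+3=2$. In any case, $q$ even does not finish the proof.
\end{itemize}

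\paragraph{The missing idea.} Recognise the configuration as an MDS code. When $t=k-1$ you have $n=q^h+k-1$ and minimum distance $n-t=q^h=n-k+1$. So $\mathcal{X}$ gives an MDS code of length $q^h+k-1$ with $q^{kh}$ codewords over an alphabet of size $q^h$; it is $\mathbb{F}_q$-linear but in general not $\mathbb{F}_{q^h}$-linear. The paper invokes Bruen and Silverman's theorem: such an MDS code with $k\ge 4$ and alphabet size $Q>2$ forces $36\mid Q$, which is impossible for a prime power.

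Your fallback, mimicking the classical proof that arcs in $\mathrm{PG}(3,q)$ have at most $q+1$ points, does not transfer to $h\ge 2$. Those proofs use the coordinate structure of $\mathrm{PG}(3,q)$, which is unavailable here. A purely combinatorial argument of Bruen--Silverman type, valid for nonlinear MDS codes, is what is required.
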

	
	\begin{proof}
		Since $\mathcal{X}$ spans $\mathrm{PG}(kh-1,q)$, we have $|\mathcal{X}| \ge k$. If $t \le k-2$ then $t-k+2 \le 0$, so $|\mathcal{X}| = (t-k+2)q^h+t \le t \le k-2$, a contradiction; hence $t \ge k-1$. If $t=k-1$ then $|\mathcal{X}|=q^h+k-1$ and the corresponding additive code is a $[q^h+k-1,k,q^h]$ MDS code over $\mathbb{F}_{q^h}$. Since $k \ge 4$ and $q^h>2$, a classical result of Bruen and Silverman \cite[Theorem~6]{MR704102} requires $36 \mid q^h$, whence $t \ge k$.
	\end{proof}
	
	\begin{lemma} \label{lem: single weight bound k=3}
		Let $\mathcal{X}$ be a set of $(h-1)$-dimensional subspaces in $\mathrm{PG}(3h-1, q)$ with the property that every hyperplane contains at most $t$ elements of $\mathcal{X}$. If $|\mathcal{X}| = (t - 1)q^h + t$ then $t\leq q^h+1$. If $t < q^h + 1$ then there exists a hyperplane disjoint from $\mathcal{X}$. If $t = q^h + 1$ then every hyperplane contains exactly $t$ elements of $\mathcal{X}$.
	\end{lemma}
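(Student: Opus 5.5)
We apply Theorem~\ref{mainthm} with $k=3$: since $|\mathcal{X}|=(t-1)q^h+t$ attains the bound, every hyperplane contains either $0$ or $t$ elements of $\mathcal{X}$. In addition, every two elements of $\mathcal{X}$ span a $(2h-1)$-dimensional subspace, so distinct elements are disjoint. Write $n=(t-1)q^h+t$ and $N=(q^{3h}-1)/(q-1)$ for the number of hyperplanes. Let $\theta_m=(q^{m}-1)/(q-1)$. Through an $(h-1)$-dimensional subspace there pass $\theta_{2h}$ hyperplanes, and through a $(2h-1)$-dimensional subspace there pass $\theta_h$ hyperplanes. The plan is to get all three claims from the standard counting of hyperplanes according to how many elements they contain.

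First count pairs $(x,H)$ with $x\in\mathcal{X}$ and $x\subset H$. Let $a_t$ be the number of $t$-secant hyperplanes and $a_0$ the number of hyperplanes disjoint from $\mathcal{X}$. Then
\[
t\,a_t = n\,\theta_{2h}, \qquad a_0+a_t=N .
\]
The first claim follows from $a_0=N-a_t\ge 0$, that is,
\[
n\,\theta_{2h}\le t\,\theta_{3h}.
\]
Substituting $n=(t-1)q^h+t$ and multiplying by $q-1$, this becomes
\[
\bigl((t-1)q^h+t\bigr)(q^{2h}-1)\le t(q^{3h}-1).
\]
Expanding gives $t q^{3h}-q^{3h}+tq^{2h}-tq^h+q^h-t\le tq^{3h}-t$. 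This simplifies to $-q^{3h}+tq^{2h}-tq^h+q^h\le 0$, which is equivalent to
\[
(t-1)q^{h}(q^h-1)\le q^{h}(q^{2h}-1),
\]
that is, $t-1\le q^h+1$. This only gives $t\le q^h+2$, which is one too weak.

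So the proof needs a local argument to obtain $t\le q^h+1$. Fix $x\in\mathcal{X}$ and consider the $\theta_{2h}$ hyperplanes through $x$; each of these is a $t$-secant. Every other element $y$ spans with $x$ a $(2h-1)$-space, which lies in exactly $\theta_h$ of these hyperplanes. Double counting pairs $(y,H)$ with $y\ne x$ and $x,y\subset H$ gives
\[
(n-1)\,\theta_h=(t-1)\,\theta_{2h}.
\]
Dividing by $\theta_h$ and using $\theta_{2h}/\theta_h=q^h+1$, this reads $n-1=(t-1)(q^h+1)$, which is consistent with the value of $n$. So this count is automatic and gives no new information.

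For the bound I would instead take a hyperplane $H$ with $t$ elements and count within it. The $t$ elements are pairwise disjoint $(h-1)$-spaces in $H\cong\mathrm{PG}(3h-2,q)$, and any two of them span a $(2h-1)$-space. Next consider the $(3h-3)$-spaces $S$ of $H$ that contain $x$. If some $S$ contained $x$ and two further elements, then $S$ would contain three elements pairwise spanning $2h$-dimensional vector spaces. Those three elements need not span more than $2h$ dimensions, so this alone gives no contradiction.

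A cleaner route is to use $x$ and the quotient geometry. Project from $x$ onto $\mathrm{PG}(2h-1,q)$. The hyperplanes through $x$ become the hyperplanes of the quotient, and the other $n-1$ elements become $(h-1)$-spaces that are pairwise distinct. Each element appears at most once, because two elements $y,z$ with $\langle x,y\rangle=\langle x,z\rangle$ would give $3$ elements in a $(2h-1)$-space. Such a $(2h-1)$-space lies in $\theta_h$ hyperplanes, and the point is to check whether this contradicts the $0$/$t$ property. Granting this, the quotient images form a partial spread of size $n-1=(t-1)(q^h+1)$, and each quotient hyperplane contains exactly $t-1$ of them. A partial spread of $(h-1)$-spaces in $\mathrm{PG}(2h-1,q)$ has at most $q^h+1$ members, each used with multiplicity $t-1$. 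So the crux is to show that the image multiset consists of spread elements each of multiplicity at most $q^h$, and from this deduce $t-1\le q^h$. This multiplicity step is the main obstacle. I would try to settle it by counting hyperplanes through a $(2h-1)$-space $\langle x,y\rangle$ that contains $m$ elements: each of these $\theta_h$ hyperplanes contains the $m$ elements, and the remaining $n-m$ elements must be distributed so that each hyperplane gets $t-m$ of them, which should force $m\le q^h$.

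For the remaining two claims, note that if $t<q^h+1$ then the inequality above is strict, so $a_0=N-a_t>0$ and some hyperplane is disjoint from $\mathcal{X}$. If $t=q^h+1$ then equality holds, so $a_0=0$ and every hyperplane contains exactly $t$ elements.
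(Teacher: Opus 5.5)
\textbf{The gap: an algebra slip leaves $t\le q^h+1$ unproved.} Your strategy is exactly the paper's: double count the incidences $(x,H)$ and combine this with the $0$/$t$ dichotomy from Theorem~\ref{mainthm}. The slip is in the factoring step. Your expansion is correct up to $-q^{3h}+tq^{2h}-tq^h+q^h\le 0$. This factors as $tq^h(q^h-1)\le q^h(q^{2h}-1)$, i.e.\ $t(q^h-1)\le q^{2h}-1$, i.e.\ $t\le q^h+1$; the coefficient is $t$, not $t-1$. Equivalently, your equations $t\,a_t=n\theta_{2h}$ and $a_0+a_t=\theta_{3h}$ give
\[
t\,a_0=t\theta_{3h}-n\theta_{2h}=\frac{q^h(q^h-1)(q^h+1-t)}{q-1}.
\]
All three claims follow from this at once: $a_0\ge 0$ forces $t\le q^h+1$, $t<q^h+1$ forces $a_0>0$, and $t=q^h+1$ forces $a_0=0$.

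Because of the slip, you declare the global count ``one too weak''. You then try to get $t\le q^h+1$ from a quotient argument whose key multiplicity step you explicitly leave open. So, as submitted, the first claim has no proof. Your final paragraph also only works with the corrected computation. Under your version $t-1\le q^h+1$, the case $t=q^h+1$ gives strict inequality and hence $a_0>0$, which contradicts the third claim you then assert.

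\textbf{The quotient detour should be deleted.} It is unnecessary, and it rests on a false step. The images of the other elements in the quotient at $x$ need not be pairwise distinct. For example, take $h=1$ and a maximal arc with $t\ge 3$: the $t-1$ further points on each line through $x$ all project to the same point of $\mathrm{PG}(1,q)$. This also clashes with your own later remark that each image has multiplicity $t-1$. Even granting distinctness, the images need not be pairwise disjoint, so they need not form a partial spread. If they did, $(t-1)(q^h+1)\le q^h+1$ would force $t\le 2$.

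With the one-line correction to the factoring, your argument coincides with the paper's proof.
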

	
	\begin{proof}
		Counting pairs $(x, H)$ where $x \in \mathcal{X}$ and $H$ is a hyperplane containing $x$,
		$$
		|\mathcal{X}| \cdot \frac{q^{2h}-1}{q-1} \le t \cdot \frac{q^{3h}-1}{q-1},
		$$
		with equality if and only if every hyperplane contains exactly $t$ elements of $\mathcal{X}$. Substituting $|\mathcal{X}|=(t-1)q^h+t$ and simplifying,
		$$
		(t-1)q^h+t \leq t\,\frac{q^{3h}-1}{q^{2h}-1} = t\left(q^h + \frac{1}{q^h+1}\right).
		$$
		This rearranges to $q^h(q^h-t+1) \ge 0$, which holds if and only if $t \le q^h+1$, with equality precisely when $t=q^h+1$ and every hyperplane is a $t$-secant. For $t<q^h-1$, apply Theorem~\ref{mainthm},  part~\ref{part: mainthm 2}.
	\end{proof}
	
	We next establish a divisibility condition satisfied by $t$ in the extremal case. 
		
	\begin{lemma} \label{dividinglemma}
		Let $\mathcal{X}$ be a multi-set of $(h-1)$-dimensional subspaces in $\mathrm{PG}(kh-1,q)$ of size $(t-k+2)q^h+t$ with the property that every hyperplane contains either $0$ or $t$ subspaces of $\mathcal{X}$. Let $\pi$ be an $(s-1)$-dimensional subspace skew to $\mathcal{X}$. Then $\pi$ is contained in exactly
		$$
		\frac{q^h-q^{kh-s-h}}{q-1}+\frac{(k-2)(q^{kh-s}-q^h)}{t(q-1)}
		$$
		zero-secant hyperplanes. In particular, if there exists a point skew to $\mathcal{X}$, then $t$ divides $(k-2)q^h$.
	\end{lemma}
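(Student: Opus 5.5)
Let $z$ be the number of zero-secant hyperplanes through $\pi$ and $w$ the number of $t$-secant hyperplanes through $\pi$. Write $n=|\mathcal{X}|=(t-k+2)q^h+t$. The plan is to produce two linear equations in $z$ and $w$ by standard double counting inside the quotient geometry of $\pi$.

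\emph{First equation.} The hyperplanes of $\mathrm{PG}(kh-1,q)$ through the $(s-1)$-dimensional subspace $\pi$ correspond to hyperplanes of a $(kh-s-1)$-dimensional projective space. Hence
$$
z+w=\frac{q^{kh-s}-1}{q-1}.
$$

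\emph{Second equation.} Count pairs $(x,H)$ with $x\in\mathcal{X}$ and $H\supseteq\pi$ a hyperplane containing $x$. Since $\pi$ is skew to $x$, the span $\langle \pi,x\rangle$ has dimension $s+h-1$. So $x$ lies in exactly $(q^{kh-s-h}-1)/(q-1)$ hyperplanes through $\pi$. Each $t$-secant hyperplane contributes $t$ pairs and each zero-secant hyperplane contributes none, so
$$
t\,w = n\,\frac{q^{kh-s-h}-1}{q-1}.
$$
Here we use the multi-set convention that each $x$ is counted with multiplicity, which is consistent with "contains $t$ subspaces".

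\emph{Solving.} Then $z=(q^{kh-s}-1)/(q-1)-w$. Substitute $n=t(q^h+1)-(k-2)q^h$, so that
$$
\frac{n}{t}=q^h+1-\frac{(k-2)q^h}{t}.
$$
Expanding $(q^h+1)(q^{kh-s-h}-1)=q^{kh-s}+q^{kh-s-h}-q^h-1$ and subtracting from $q^{kh-s}-1$ gives the first term $(q^h-q^{kh-s-h})/(q-1)$. The correction term is
$$
\frac{(k-2)q^h(q^{kh-s-h}-1)}{t(q-1)}=\frac{(k-2)(q^{kh-s}-q^h)}{t(q-1)},
$$
which is exactly the stated formula. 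This algebra is the only step needing care, and it is routine.

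\emph{Divisibility.} Take $s=1$, with $\pi$ a point skew to $\mathcal{X}$. Then $z$ is an integer and the first term $(q^h-q^{(k-1)h-1})/(q-1)$ is an integer, so $t$ divides
$$
\frac{(k-2)q^h\,(q^{(k-1)h-1}-1)}{q-1}.
$$
To upgrade this to $t\mid (k-2)q^h$, I would use $\gcd\bigl(t,(q^{(k-1)h-1}-1)/(q-1)\bigr)$ as follows. Apply the count with $s=h$ or larger, or combine two values of $s$: a point skew to $\mathcal{X}$ lies in many lines skew to $\mathcal{X}$, since the union of $\mathcal{X}$ is small. Then $t$ divides $(k-2)q^h\cdot(q^{a}-1)/(q-1)$ for two exponents $a$ whose corresponding $q$-numbers have gcd $1$. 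Taking $a=kh-s-h$ for $s=1$ and $s=2$, the values $[m]_q$ and $[m-1]_q$ have gcd $[\gcd(m,m-1)]_q=1$.

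\emph{Main obstacle.} The hardest step is this last one: guaranteeing a line skew to $\mathcal{X}$ through the given point. I would argue by counting. The lines through the point that meet $\mathcal{X}$ number at most $n(q^h-1)/(q-1)$, which is far fewer than the $(q^{kh-1}-1)/(q-1)$ lines through the point. This holds because $t\le$ roughly $q^h$ forces $n\lesssim q^{2h}$, a bound that follows from Lemma \ref{lem: single weight bound k=3}-type counting. If that fails, I would instead look for an alternative integrality, such as counting zero-secant hyperplanes through a skew $(h-1)$-space.
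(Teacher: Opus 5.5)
Your derivation of the formula is the same as the paper's: the two equations $z+w=(q^{kh-s}-1)/(q-1)$ and $tw=n(q^{kh-s-h}-1)/(q-1)$, solved for $z$. The gap is in the divisibility step. Pairing $s=1$ with $s=2$ requires a line through the given skew point that is itself skew to $\mathcal{X}$. Your counting argument does not produce one, because the inequality you need goes the wrong way.

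For $k=3$ and $t\le q^h$ you have $n\le q^{2h}$. The union of $\mathcal{X}$ can then contain $q^{2h}(q^h-1)/(q-1)$ points, which already exceeds the $(q^{3h-1}-1)/(q-1)$ lines through a point. Concretely, for $t=q^h$ the field reduction of $\mathrm{AG}(2,q^h)$ satisfies the hypotheses. Its union is everything outside a $(2h-1)$-space $\Sigma$, the skew points are the points of $\Sigma$, and a line through such a point is skew to $\mathcal{X}$ only if it lies in $\Sigma$. That is a proportion of roughly $q^{-h}$ of the lines, so no crude count can detect it. The same failure occurs for $k=4$, $h=1$, $t=q+2$, where $n=q^2+q+2$ points outnumber the $q^2+q+1$ lines through a point. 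These are exactly the situations in which the lemma is later invoked (Theorem~\ref{cor: divisibility} and Theorem~\ref{lem: k=4 maximal arc DNE}). Note also that for general $k$ the hypotheses only bound $t$ by roughly $(k-2)q^h$, not $q^h$. Your fallback ideas ($s=h$, or a skew $(h-1)$-space) need even larger skew subspaces through the point, so they are harder, not easier.

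The missing idea is to go down in $s$ rather than up, which is what the paper does. The formula is valid for $s=0$, with $\pi$ the empty subspace, which is vacuously skew to $\mathcal{X}$. Then $z$ is the total number of zero-secant hyperplanes, an integer. Since $(q^h-q^{kh-h})/(q-1)$ is an integer, $t$ divides $(k-2)q^h\,(q^{kh-h}-1)/(q-1)$ with no existence assumption at all. Your $s=1$ computation gives $t\mid (k-2)q^h\,(q^{kh-h-1}-1)/(q-1)$. The identity $\frac{q^{m}-1}{q-1}-q\cdot\frac{q^{m-1}-1}{q-1}=1$ shows these two $q$-numbers are coprime, so $t\mid (k-2)q^h$. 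Only the given skew point is needed.
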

	
	\begin{proof}
		Let $a_0$ and $a_t$ denote the number of zero-secant and $t$-secant hyperplanes containing $\pi$, respectively. Counting hyperplanes through $\pi$,
		$$
		a_0+a_t=\frac{q^{kh-s}-1}{q-1},
		$$
		and counting pairs $(x,H)$ with $x \in \mathcal{X}$ and $H$ a hyperplane through both $\pi$ and $x$,
		$$
		ta_t=\bigl((t-k+2)q^h+t\bigr)\frac{q^{kh-s-h}-1}{q-1}.
		$$
		Solving for $a_0$ gives the stated formula. Applying this for $s=0$ and $s=1$ yields
		$$
		t \mid (k-2)q^h(1+q+\cdots+q^{kh-h-1}) \quad \text{and} \quad t \mid (k-2)q^h(1+q+\cdots+q^{kh-h-2}),
		$$
		from which $t$ divides $(k-2)q^h$.
	\end{proof}

	\begin{theorem} \label{cor: divisibility}
		Let $\mathcal{X}$ be a length-maximal set of $(h-1)$-dimensional subspaces in $\mathrm{PG}(kh-1, q)$ with $k\ge 3$ and $t < q^h+k-2$. Then $t-k+3$ divides $q^h$.
	\end{theorem}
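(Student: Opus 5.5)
The plan is to reduce to the case $k=3$ by projecting from an element of $\mathcal{X}$, and then to apply Lemma~\ref{dividinglemma}. The key observation is that the quantity $t-k+3$ is invariant when $k$ and $t$ are both decreased by one. The hypothesis $t<q^h+k-2$ is invariant in the same way.

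\emph{Reduction step.} Suppose $k\ge 4$ and $\mathcal{X}$ is length-maximal in $\mathrm{PG}(kh-1,q)$. Fix $x\in\mathcal{X}$ and work in the quotient geometry $\mathrm{PG}(kh-1,q)/x\cong\mathrm{PG}((k-1)h-1,q)$.
\begin{enumerate}
\item By part~\ref{part: mainthm 1} of Theorem~\ref{mainthm} with $k-1\ge 2$, any two elements of $\mathcal{X}$ span a $(2h-1)$-space. Hence every $y\in\mathcal{X}\setminus\{x\}$ projects to an $(h-1)$-dimensional subspace of the quotient, so $\mathcal{X}'=\{\langle x,y\rangle/x : y\ne x\}$ is a multi-set of $n-1$ such subspaces.
\item Hyperplanes of the quotient correspond to hyperplanes through $x$. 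Each of these contains at most $t-1$ elements of $\mathcal{X}\setminus\{x\}$, so every hyperplane of the quotient contains at most $t'=t-1$ elements of $\mathcal{X}'$.
\item The size of $\mathcal{X}'$ is $n-1=(t-k+2)q^h+t-1=(t'-(k-1)+2)q^h+t'$, so $\mathcal{X}'$ attains the maximal arc bound for $(k-1,t')$.
\item $\mathcal{X}'$ spans the quotient because $\mathcal{X}$ spans the whole space.
\end{enumerate}
Thus $\mathcal{X}'$ is length-maximal with parameters $(k-1,t-1)$. Moreover $t'-(k-1)+3=t-k+3$ and $t'<q^h+(k-1)-2$. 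Iterating, we reach $k=3$ with some $t$ satisfying $t<q^h+1$, and it suffices to show that $t\mid q^h$ there.

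\emph{Case $k=3$.} By part~\ref{part: mainthm 2} of Theorem~\ref{mainthm}, every hyperplane contains $0$ or $t$ elements of $\mathcal{X}$. To apply Lemma~\ref{dividinglemma} we need a point skew to $\mathcal{X}$, and we obtain it by counting.
\begin{enumerate}
\item Part~\ref{part: mainthm 1} says that any two elements of $\mathcal{X}$ are disjoint. They therefore cover exactly $n(q^h-1)/(q-1)$ points.
\item Since $t\le q^h$, we have $n=(t-1)q^h+t\le q^{2h}$.
\item Hence $n(q^h-1)\le q^{2h}(q^h-1)<q^{3h}-1$, so some point is skew to $\mathcal{X}$.
\end{enumerate}
Lemma~\ref{dividinglemma} with $k=3$ then gives $t\mid (k-2)q^h=q^h$, which is the claim for $k=3$.

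The main point needing care is the reduction step. One must check that the projected family consists of genuine $(h-1)$-spaces; this is where part~\ref{part: mainthm 1} is used. One must also check that it still spans, so that it is again length-maximal and not the degenerate case. Both checks are straightforward, so the obstacle is mild. The other delicate point is the existence of a skew point, and the counting above handles it.
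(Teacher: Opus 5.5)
Your proof is correct and follows the paper's argument. You quotient at an element of $\mathcal{X}$ to pass from $(k,t)$ to $(k-1,t-1)$, which preserves both $t-k+3$ and the hypothesis $t<q^h+k-2$; then for $k=3$ you use pairwise disjointness and $n\le q^{2h}$ to find a point skew to $\mathcal{X}$ and apply Lemma~\ref{dividinglemma}. You also check explicitly that the projected elements are $(h-1)$-spaces and that they still span the quotient, which the paper leaves implicit.
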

	
	\begin{proof}
		For $k=3$, part~\ref{part: mainthm 2} of Theorem~\ref{mainthm} gives that every hyperplane contains $0$ or $t$ elements of $\mathcal{X}$.   Since $t<q^h+1$, we have $|\mathcal{X}|=(t-1)q^h+t \le q^{2h}$. The elements of $\mathcal{X}$ are pairwise disjoint by part~\ref{part: mainthm 1} of Theorem~\ref{mainthm}, so they cover at most $
		q^{2h}\,\frac{q^h-1}{q-1} < \frac{q^{3h}-1}{q-1}
		$
		points of $\mathrm{PG}(3h-1,q)$. Hence there is a point skew to $\mathcal{X}$, and Lemma~\ref{dividinglemma} gives that $t$ divides $q^h$.
		
		For $k \ge 4$, we argue inductively. By part~\ref{part: mainthm 1} of Theorem~\ref{mainthm}, taking the quotient at any element $x \in \mathcal{X}$ yields a set $\mathcal{X}'$ of $(h-1)$-dimensional subspaces in $\mathrm{PG}((k-1)h-1,q)$ of size $ ((t-1)-(k-1)+2)q^h+(t-1)$, with every hyperplane containing at most $t-1$ elements of $\mathcal{X}'$. Thus $\mathcal{X}'$ is length-maximal with parameter $t-1$ in $\mathrm{PG}((k-1)h-1,q)$. Since $t-1 < q^h+k-3$, the induction hypothesis gives $(t-1)-(k-1)+3 = t-k+3$ divides $q^h$.
	\end{proof}
	
	\section{Length-maximal sets for $k \ge 4$}\label{sec: k ge 4}
	
	We enumerate secant hyperplanes of a length-maximal set $\mathcal{X}$. A hyperplane containing precisely $s$ elements of $\mathcal{X}$ is called an \emph{$s$-secant}. For $\mathcal{X}$ length-maximal we write $N(t,h,k,s)$ for the number of $s$-secants of $\mathcal{X}$ in $\mathrm{PG}(kh-1,q)$.
	
	\begin{lemma}\label{lem: enumerating secants}
		Let $\mathcal{X}$ be a length-maximal set of $(h-1)$-dimensional subspaces in $\mathrm{PG}(kh-1,q)$. Set $n = |\mathcal{X}| = (t-k+2)q^h+t$. Then
		
		\begin{enumerate}
			\item $\displaystyle N(t,h,3,t) = \frac{q^{3h}-1}{q-1} -\frac{q^h-1}{q-1}\cdot\left(\frac{q^h(q^h+1)}{t}-q^h\right)$ \label{part 1: lem: enumerating secants}
			
			\item $\displaystyle N(t,h,3,0) = \frac{q^h-1}{q-1}\cdot\left(\frac{q^h(q^h+1)}{t}-q^h\right)$ \label{part 2: lem: enumerating secants}
			
			\item $\displaystyle N(t,h,4,t) = \left( q^h + 1 - \frac{2q^h}{t} \right) \left( q^h + 1 - \frac{q^h}{t-1} \right) \frac{q^{2h}-1}{q-1}$ \label{part 3: lem: enumerating secants}
			
			\item $\displaystyle N(t,h,4,1) = n\cdot N(t-1,h,3,0) =\left((t-2)q^h + t\right) \cdot \frac{q^h-1}{q-1}\cdot \left(  \frac{q^h(q^h+1)}{t-1}-q^h \right)$ \label{part 4: lem: enumerating secants}
			
			\item $\displaystyle N(q^h+2,h,5,2) = \frac{n}{2}\cdot N(q^h+1,h,4,1) = \frac{q^{2h}+2}{2}\cdot (q^{2h}+1)\cdot\frac{q^h-1}{q-1}$ \label{part 5: lem: enumerating secants}
			
			\item $\displaystyle N(q^h+2,h,5,q^h+2) = \frac{(q^{2h}+2)(q^{2h}+1)q^h(q^h-1)}{(q^h+2)(q-1)}$ \label{part 6: lem: enumerating secants}
		\end{enumerate}
	\end{lemma}
	
	\begin{proof}
		Set $n = (t-k+2)q^h+t$ and we use the term $(h-1)$-flat for an $(h-1)$-dimensional subspace.
		
		\medskip\noindent\textit{Part~\ref{part 1: lem: enumerating secants}.}
		By part~\ref{part: mainthm 2} of Theorem~\ref{mainthm}, every hyperplane is a $0$-secant or a $t$-secant when $k=3$. Counting pairs $(x,H)$ with $x \in \mathcal{X}$ and $H$ a $t$-secant through $x$, and using the fact that each $(h-1)$-flat in $\mathrm{PG}(3h-1,q)$ lies in $\frac{q^{2h}-1}{q-1}$ hyperplanes,
		$$
		N(t,h,3,t)\cdot t = n\cdot \frac{q^{2h}-1}{q-1}.
		$$
		Substituting $n=(t-1)q^h+t$ and rearranging gives the formula. Part~\ref{part 2: lem: enumerating secants} follows since $N(t,h,3,0)+N(t,h,3,t)=|\mathrm{PG}(3h-1,q)|$.
		
		\medskip\noindent\textit{Part~\ref{part 3: lem: enumerating secants}.}
		By part~\ref{part: mainthm 1} of Theorem~\ref{mainthm}, any two elements of $\mathcal{X}$ span a $(2h-1)$-flat; each such flat lies in $\frac{q^{2h}-1}{q-1}$ hyperplanes of $\mathrm{PG}(4h-1,q)$. By part~\ref{part: mainthm 2} of Theorem~\ref{mainthm}, each hyperplane is a $0$-, $1$-, or $t$-secant when $k=4$. Counting triples $(x,y,H)$ with $x \ne y$ in $\mathcal{X}$ and $H$ a $t$-secant through both,
		$$
		N(t,h,4,t)\cdot \binom{t}{2} = \binom{n}{2} \cdot \frac{q^{2h}-1}{q-1}.
		$$
		Substituting $n=(t-2)q^h+t$ and simplifying gives the formula.
		
		\medskip\noindent\textit{Part~\ref{part 4: lem: enumerating secants}.}
		For $k=4$, a $1$-secant hyperplane contains exactly one element $x_i \in \mathcal{X}$. The quotient of $\mathcal{X}$ at any $x_i \in \mathcal{X}$ is a length-maximal set $\mathcal{X}_i^*$ in $\mathrm{PG}(3h-1,q)$ with parameter $t-1$. Each $0$-secant of $\mathcal{X}_i^*$ corresponds to a $1$-secant of $\mathcal{X}$ through $x_i$, so summing over all $x_i$,
		$$
		N(t,h,4,1) = n \cdot N(t-1,h,3,0).
		$$
		Substituting the formula from part~\ref{part 2: lem: enumerating secants} of Lemma~\ref{lem: enumerating secants} with $t$ replaced by $t-1$ gives the result.
		
		\medskip\noindent\textit{Part~\ref{part 5: lem: enumerating secants}.}
		Now let $k=5$ and $t=q^h+2$, so $n=q^{2h}+2$. By part~\ref{part: mainthm 2} of Theorem~\ref{mainthm}, every hyperplane is a $0$-, $1$-, $2$-, or $t$-secant. Since any hyperplane containing $3$ or more elements of $\mathcal{X}$ must contain $t$ elements (as any $3$ elements span a $(3h-1)$-flat, and any hyperplane through a $(3h-1)$-flat that contains a fourth element of $\mathcal{X}$ must contain all $t$ by part~\ref{part: mainthm 2} of Theorem~\ref{mainthm}), the secant types are restricted to $0$, $1$, $2$, and $t$.
		
		For $k=5$, each element $x_i \in \mathcal{X}$ lies in a $2$-secant hyperplane $H$ if and only if $H$ contains exactly one further element $x_j \ne x_i$. The number of $2$-secant hyperplanes through $x_i$ equals the number of $1$-secant hyperplanes of the quotient set $\mathcal{X}_i^*$ in $\mathrm{PG}(4h-1,q)$, which has parameter $t-1=q^h+1$. Summing over all $x_i$ and dividing by $2$ (since each $2$-secant is counted twice),
		$$
		N(q^h+2,h,5,2) = \frac{n}{2}\cdot N(q^h+1,h,4,1).
		$$
		By Theorem~\ref{lem: k=4 maximal arc DNE} (proved below), the quotient set $\mathcal{X}_i^*$ has $t-1=q^h+1$ and has no $0$-secants, so $N(q^h+1,h,4,1) = |\mathrm{PG}(4h-1,q)| - N(q^h+1,h,4,q^h+1)$. Alternatively, substituting $t-1=q^h+1$ directly into part~\ref{part 4: lem: enumerating secants} of Lemma~\ref{lem: enumerating secants} and simplifying gives
		$$
		N(q^h+2,h,5,2) = \frac{q^{2h}+2}{2}\cdot (q^{2h}+1)\cdot\frac{q^h-1}{q-1}.
		$$
		
		\medskip\noindent\textit{Part~\ref{part 6: lem: enumerating secants}.}
		With $k=5$, $t=q^h+2$, $n=q^{2h}+2$, any three elements of $\mathcal{X}$ span a $(3h-1)$-flat (since any $k-1=4$ elements span a $(4h-1)$-flat by part~\ref{part: mainthm 1} of Theorem~\ref{mainthm}, which implies any three also span a $(3h-1)$-flat). Each $(3h-1)$-flat in $\mathrm{PG}(5h-1,q)$ lies in $\frac{q^{2h}-1}{q-1}$ hyperplanes. Counting quadruples $(x_1,x_2,x_3,H)$ with distinct $x_i \in \mathcal{X}$ and $H$ a $t$-secant through all three,
		$$
		N(q^h+2,h,5,q^h+2)\cdot \binom{t}{3} = \binom{n}{3}\cdot\frac{q^{2h}-1}{q-1}.
		$$
		Substituting $n=q^{2h}+2$ and $t=q^h+2$ and simplifying,
		$$
		N(q^h+2,h,5,q^h+2) = \frac{(q^{2h}+2)(q^{2h}+1)q^{2h}}{(q^h+2)(q^h+1)q^h}\cdot\frac{q^{2h}-1}{q-1} = \frac{(q^{2h}+2)(q^{2h}+1)q^h(q^h-1)}{(q^h+2)(q-1)}.
		$$
	\end{proof}
	
	\begin{theorem}\label{lem: k=4 maximal arc DNE}
		Let $\mathcal{X}$ be a length-maximal set of $(h-1)$-dimensional subspaces in $\mathrm{PG}(4h-1, q)$ with $q^h > 2$. Then $t=q^h+1$ and every hyperplane is either a $t$-secant or a $1$-secant of $\mathcal{X}$.
	\end{theorem}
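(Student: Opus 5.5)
The plan is to pin down $t$ with the quotient construction from the proof of Theorem~\ref{cor: divisibility}, and then read off the secant distribution from the counts in Lemma~\ref{lem: enumerating secants}.

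\textbf{Step 1: an upper bound on $t$.} By part~\ref{part: mainthm 1} of Theorem~\ref{mainthm}, the quotient of $\mathcal{X}$ at any $x\in\mathcal{X}$ is a length-maximal set in $\mathrm{PG}(3h-1,q)$ with parameter $t-1$. Lemma~\ref{lem: single weight bound k=3} then gives $t-1\le q^h+1$, so $t\le q^h+2$.

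\textbf{Step 2: a lower bound on $t$.} Lemma~\ref{lem: no maximal MDS codes} gives $t\ge 4$.

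\textbf{Step 3: the secant types.} By part~\ref{part: mainthm 2} of Theorem~\ref{mainthm}, every hyperplane is a $0$-, $1$- or $t$-secant. Writing $N_0,N_1,N_t$ for the numbers of each, we have
$$N_0+N_1+N_t=\frac{q^{4h}-1}{q-1},$$
with $N_t$ and $N_1$ given by parts~\ref{part 3: lem: enumerating secants} and~\ref{part 4: lem: enumerating secants} of Lemma~\ref{lem: enumerating secants}. So everything reduces to determining which $t\in[4,q^h+2]$ are consistent, and then computing $N_0$.

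\textbf{Case $t=q^h+2$.} The quotient at each $x$ has parameter $q^h+1$. By Lemma~\ref{lem: single weight bound k=3}, every hyperplane through $x$ is then a $t$-secant, so $N_1=0$. Double counting pairs $(x,H)$ gives
$$N_t=\frac{n}{t}\cdot\frac{q^{3h}-1}{q-1}, \qquad n=q^{2h}+q^h+2.$$
Comparing this with the triple count in part~\ref{part 3: lem: enumerating secants} should force
$$\frac{2q^h}{q^h+2}\in\mathbb{Z},$$
possibly after a divisibility argument because only the full product must be an integer. That requires $q^h+2\mid 4$, i.e.\ $q^h=2$, which is excluded.

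\textbf{Case $t\le q^h$.} Here $t<q^h+k-2$, so Theorem~\ref{cor: divisibility} gives $t-1\mid q^h$; hence $t-1=p^a$ with $a\ge 1$, since $t\ge4$. I would derive a contradiction in one of two ways.
\begin{itemize}
\item The first attempt is to show that $N_t$ from part~\ref{part 3: lem: enumerating secants} cannot be an integer. Every factor other than $q^h+1-\frac{2q^h}{t}$ is coprime to $t$, because $\gcd(t,p)=1$ and $\gcd(t,t-1)=1$. This should force $t\mid 2q^h$, hence $t=p^a+1\mid 2$, a contradiction. The care needed is with $\gcd\bigl(t,\tfrac{q^{2h}-1}{q-1}\bigr)$.
\item If that coprimality fails, the fallback is to compute $N_0$ explicitly as $\frac{q^{4h}-1}{q-1}-N_1-N_t$ and show it is negative for $t-1$ a proper divisor of $q^h$. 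Another option is a second-moment count of the elements meeting a putative $0$-secant hyperplane.
\end{itemize}

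\textbf{Case $t=q^h+1$.} This is now the only remaining value. The quotients have parameter $q^h$. I would substitute $t=q^h+1$ into parts~\ref{part 3: lem: enumerating secants} and~\ref{part 4: lem: enumerating secants} and check that $N_1+N_t=\frac{q^{4h}-1}{q-1}$, so that $N_0=0$, giving the stated dichotomy.

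I expect the main obstacle to be the case $4\le t\le q^h$: making the integrality argument watertight, or else proving that the explicit $N_0$ is negative. The rest is direct substitution into earlier formulas.
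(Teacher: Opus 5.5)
\textbf{The genuine gap is the case $t=q^h+2$.} Your skeleton otherwise matches the paper: $4\le t\le q^h+2$ via Lemma~\ref{lem: no maximal MDS codes} and the quotient, secant types $0,1,t$, and $N_0=0$ by substitution at $t=q^h+1$.

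In this case your pair count $N_t=\frac{n}{t}\cdot\frac{q^{3h}-1}{q-1}$ and the triple count of part~\ref{part 3: lem: enumerating secants} of Lemma~\ref{lem: enumerating secants} are \emph{identical}. The second factor there becomes $\frac{q^{2h}+q^h+1}{q^h+1}$, and $(q^{2h}+q^h+1)(q^h-1)=q^{3h}-1$. So comparing them gives nothing.

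Nor does any global integrality condition force $t\mid 2q^h$. For $(q,h)=(4,1)$, $t=6$, $n=22$ you get $N_t=77$, $N_1=0$, $N_0=8$. These are non-negative integers summing to $|\mathrm{PG}(3,4)|=85$. The same happens for $(q,h)=(2,2)$ and $(2,4)$.

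What is missing is a \emph{local} count, which is the paper's argument:
\begin{enumerate}
\item Since $N_1=0$, every hyperplane is a $0$- or $t$-secant.
\item The elements of $\mathcal{X}$ cover at most $(q^{2h}+q^h+2)\frac{q^h-1}{q-1}<\frac{q^{4h}-1}{q-1}$ points, so some point is skew to $\mathcal{X}$.
\item The final assertion of Lemma~\ref{dividinglemma}, counting zero-secant hyperplanes through that point, gives $t\mid 2q^h$.
\item Then $q^h+2\mid 4$, which is impossible for $q^h>2$.
\end{enumerate}

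\textbf{Case $4\le t\le q^h$.} Your first route fails because the claimed coprimality is false. Take $q=32$, $h=1$, $t=5$, so that $t-1\mid q^h$. The factor $q^h+1-\frac{q^h}{t-1}=25$ is divisible by $t$, and $N_t=\frac{101}{5}\cdot 25\cdot 33=16665$ is an integer. For $(q,h)=(2,4)$, $t=5$, it is instead $\frac{q^{2h}-1}{q-1}=255$ that absorbs the $5$.

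Your fallback is exactly the paper's argument, and it works without needing $t-1\mid q^h$. Subtracting parts~\ref{part 3: lem: enumerating secants} and~\ref{part 4: lem: enumerating secants} from $|\mathrm{PG}(4h-1,q)|$ gives $N_0=\frac{q^h(q^h-1)(t-q^h-1)(t(q^h+1)-4q^h-2)}{t(q-1)}$. For $4\le t\le q^h$ the third factor is at most $-1$ and the fourth at least $2$, so $N_0<0$. The same formula gives $N_0=0$ at $t=q^h+1$, which completes that case as you planned. You should commit to this computation rather than leave it as a fallback.
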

	
	\begin{proof}
		By part~\ref{part: mainthm 2} of Theorem~\ref{mainthm}, every hyperplane is a $0$-, $1$-, or $t$-secant. The counts $N(t,h,4,0)$, $N(t,h,4,1)$, $N(t,h,4,t)$ are non-negative integers summing to $|\mathrm{PG}(4h-1,q)|$. Using parts~\ref{part 3: lem: enumerating secants} and~\ref{part 4: lem: enumerating secants} of Lemma~\ref{lem: enumerating secants},
		\begin{align}
		N(t,h,4,0) & = |\mathrm{PG}(4h-1,q)| - N(t,h,4,t) - N(t,h,4,1) \nonumber \\ 
		& = \frac{q^h(q^h-1)(t-q^h-1)\bigl(t(q^h+1)-4q^h-2\bigr)}{t(q-1)}. \label{eqn: 4_0}
		\end{align}	
		By Lemma~\ref{lem: no maximal MDS codes}, $t \ge 4$. Taking the quotient of $\mathcal{X}$ at any of its elements yields a length-maximal set in $\mathrm{PG}(3h-1,q)$ with parameter $t-1$, so Lemma~\ref{lem: single weight bound k=3} gives $t-1 \le q^h+1$, that is, $t \le q^h+2$. We rule out every value other than $t=q^h+1$.
		
		For $4 \le t \le q^h$,  $(t-q^h-1) \le -1 $, and $t(q^h+1)-(4q^h+2) \ge 4(q^h+1)-(4q^h+2) = 2 > 0$, so from (\ref{eqn: 4_0}) $N(t,h,4,0) < 0$, a contradiction.

		For $t = q^h+2$, substituting into part~\ref{part 4: lem: enumerating secants} of Lemma~\ref{lem: enumerating secants} gives $N(t,h,4,1)=0$, so every hyperplane is a $0$- or $t$-secant. Since $|\mathcal{X}| = q^{2h}+q^h+2$ and each element is an $(h-1)$-flat with $\frac{q^h-1}{q-1}$ points, the elements of $\mathcal{X}$ cover at most $
		(q^{2h}+q^h+2)\cdot\frac{q^h-1}{q-1} < \frac{q^{4h}-1}{q-1}
		$
		points of $\mathrm{PG}(4h-1,q)$, so some point is skew to $\mathcal{X}$. Lemma~\ref{dividinglemma} then gives $t \mid 2q^h$; but $t=q^h+2$ divides $2q^h = 2(q^h+2)-4$ only if $q^h+2 \mid 4$, giving $q^h \le 2$. 
		Therefore $t=q^h+1$, and substituting gives $N(t,h,4,0)=0$.
	\end{proof}
	
	\begin{theorem}\label{lem: k=5 maximal arc DNE}
		There is no length-maximal set of $(h-1)$-dimensional subspaces in $\mathrm{PG}(5h-1, q)$ when $q^h > 3$.
	\end{theorem}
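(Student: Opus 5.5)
The plan is to reduce to the case $k=4$ via quotients, pin down $t$, and then show by counting hyperplanes that the number of $0$-secants would be negative.

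\emph{Step 1: fix $t$.} Suppose $\mathcal{X}$ is length-maximal in $\mathrm{PG}(5h-1,q)$ with $q^h>3$. As in the proof of Theorem~\ref{cor: divisibility}, part~\ref{part: mainthm 1} of Theorem~\ref{mainthm} lets us take the quotient at any $x\in\mathcal{X}$. This gives a length-maximal set $\mathcal{X}_x^*$ in $\mathrm{PG}(4h-1,q)$ with parameter $t-1$. Since $q^h>2$, Theorem~\ref{lem: k=4 maximal arc DNE} forces $t-1=q^h+1$. Hence $t=q^h+2$ and $n=|\mathcal{X}|=q^{2h}+2$. Theorem~\ref{lem: k=4 maximal arc DNE} also says every hyperplane of $\mathcal{X}_x^*$ is a $1$-secant or a $(q^h+1)$-secant.

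\emph{Step 2: restrict the secant types.} As in part~\ref{part 5: lem: enumerating secants} of Lemma~\ref{lem: enumerating secants}, every hyperplane of $\mathrm{PG}(5h-1,q)$ is a $0$-, $1$-, $2$- or $t$-secant. A $1$-secant $H$ through $x$ would correspond to a $0$-secant of $\mathcal{X}_x^*$, and there are none, so $N(q^h+2,h,5,1)=0$.

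\emph{Step 3: count the $0$-secants.} With $Q=q^h$, we have
$$N(q^h+2,h,5,0)=\frac{Q^5-1}{q-1}-N(q^h+2,h,5,2)-N(q^h+2,h,5,q^h+2).$$
We substitute parts~\ref{part 5: lem: enumerating secants} and~\ref{part 6: lem: enumerating secants} of Lemma~\ref{lem: enumerating secants} and multiply through by $2(Q+2)(q-1)$. This shows $N(q^h+2,h,5,0)$ has the sign of
$$P(Q)=2(Q+2)(Q^5-1)-(Q^2+2)(Q^2+1)(Q-1)(3Q+2).$$

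\emph{Step 4: sign of $P$.} This is the main step. The polynomial $P$ has leading term $2Q^6-3Q^6=-Q^6$, so it is eventually negative. A check gives $P(3)=10\cdot 242-11\cdot10\cdot2\cdot11=0$, consistent with the ternary Golay example. So $Q-3$ divides $P$. I would write $P(Q)=-(Q-3)R(Q)$, with $R$ of degree $5$ and leading coefficient $1$, and verify $R(Q)>0$ for all $Q\ge 4$. The natural route is to show $R$ has nonnegative coefficients after the shift $Q=4+u$, or to bound the lower-order terms crudely. As a sanity check, $P(4)=12\cdot1023-18\cdot17\cdot3\cdot14=12276-12852<0$. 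Then $N(q^h+2,h,5,0)<0$ for every $q^h\ge4$, which is a contradiction, so no length-maximal set exists.

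The only real work is this factorisation and positivity check. If the positivity of $R$ turns out to be awkward, a fallback is integrality: $N(q^h+2,h,5,q^h+2)$ must be an integer, which forces $Q+2$ to divide $(Q^2+2)(Q^2+1)Q(Q-1)/(q-1)$. Reducing modulo $Q+2$ gives $6\cdot5\cdot(-2)(-3)=180$, up to the factor $q-1$, so this cuts the possible $Q$ to a finite list that can be handled directly.
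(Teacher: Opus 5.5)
Your proposal is correct and is essentially the paper's proof. The paper bounds the same quantity $\delta=N(q^h+2,h,5,q^h+2)+N(q^h+2,h,5,2)-|\mathrm{PG}(5h-1,q)|=-P(Q)/(2(Q+2)(q-1))$, and your factorisation works out explicitly as $P(Q)=-Q^3(Q-1)^2(Q-3)$, so $R(Q)=Q^3(Q-1)^2>0$. The only differences are your Step~2, which shows $N(q^h+2,h,5,1)=0$ validly but unnecessarily (the paper gets its contradiction directly from $N(q^h+2,h,5,0)+N(q^h+2,h,5,1)\ge 0$), and the integrality fallback, which is not needed.
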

	
	\begin{proof}
		Suppose for a contradiction that $\mathcal{X}$ is such a set, with $|\mathcal{X}|=(t-3)q^h+t$. The quotient of $\mathcal{X}$ at any element is a length-maximal set in $\mathrm{PG}(4h-1,q)$ with parameter $t-1$. By Theorem~\ref{lem: k=4 maximal arc DNE}, we obtain $t-1=q^h+1$, that is $t=q^h+2$. Hence $n=|\mathcal{X}|=q^{2h}+2$.
		
		The secant types are $0$, $1$, $2$, and $t=q^h+2$. Consider the quantity
		\begin{equation} \label{eqn: difference k = 5}
			\delta = N(q^h+2,h,5,q^h+2) + N(q^h+2,h,5,2) - |\mathrm{PG}(5h-1,q)|,
		\end{equation}
		which satisfies $\delta \le 0$ since $N(q^h+2,h,5,0)$ and $N(q^h+2,h,5,1)$ are non-negative. Substituting parts~\ref{part 5: lem: enumerating secants} and~\ref{part 6: lem: enumerating secants} of Lemma~\ref{lem: enumerating secants} and expanding, we obtain
		\begin{equation}\label{eqn: delta k = 5}
			2(q^h+2)(q-1)\,\delta = (q^h)^6 - 5(q^h)^5 + 7(q^h)^4 - 3(q^h)^3 = (q^h)^3(q^h-1)^2(q^h-3).
		\end{equation}
		
		 Since $q^h > 3$, this gives $\delta > 0$, contradicting $\delta \le 0$. (When $q^h=3$ the right-hand side of~\eqref{eqn: delta k = 5} vanishes and there are two secant types.  This case is genuinely exceptional, see Example~\ref{ex: golay}.)
	\end{proof}

\begin{remark}\label{rem: induction}
	By induction on the quotient construction, Theorem~\ref{lem: k=5 maximal arc DNE} extends to all $k \ge 5$ when $q^h > 3$.
\end{remark}

	\begin{example}\label{ex: golay}
		For $(q,h)=(3,1)$, length-maximal sets exist both for $k=5$ and for $k=6$. In each case $t=k$ and $|\mathcal{X}|=k+6$, so $\mathcal{X}$ is a set of $k+6$ points of $\mathrm{PG}(k-1,3)$ giving a $[k+6,k,6]_3$ code. In each case the dual code has minimum distance $k$, so every $k-1$ of the points are linearly independent and part~\ref{part: mainthm 1} of Theorem~\ref{mainthm} holds.

		For $k=5$, let $\mathcal{X}$ be the $11$ points of $\mathrm{PG}(4,3)$ given by the columns of a generator matrix of the $[11,5,6]_3$ code dual to the perfect ternary Golay code. This is a projective two-weight code with weights $6$ and $9$, so every hyperplane meets $\mathcal{X}$ in $11-6=5$ or $11-9=2$ points; thus $t=5=q^h+2$ and the secant types are $2$ and $t$. Here the right-hand side of~\eqref{eqn: delta k = 5} vanishes.

		For $k=6$, let $\mathcal{X}$ be the $12$ points of $\mathrm{PG}(5,3)$ given by the columns of a generator matrix of the self-dual extended ternary Golay code $[12,6,6]_3$. Its weights are $6$, $9$ and $12$, so every hyperplane meets $\mathcal{X}$ in $6$, $3$ or $0$ points; thus $t=6$ and the secant types are $0$, $3=k-3$ and $t$. In particular this length-maximal set is a three-weight code.
	\end{example}

	
	\begin{prop}\label{prop: q3 k7}
		For $q^h>2$ there is no length-maximal set with $k\ge 7$.
	\end{prop}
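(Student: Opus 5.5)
For $q^h>3$ the statement is Remark~\ref{rem: induction}: Theorem~\ref{lem: k=5 maximal arc DNE} excludes $k=5$, and repeatedly quotienting a hypothetical length-maximal set with $k\ge 6$ at one of its elements lands in the excluded case $k=5$. Since $q^h>2$, the only remaining case is $q^h=3$, that is $(q,h)=(3,1)$, where $\mathcal{X}$ is a set of points. I would handle $k=7$ directly and then reduce $k\ge 8$ to $k=7$ by the same quotient induction.

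\emph{The quotient step.} Suppose $\mathcal{X}$ is length-maximal with parameter $t$ in $\mathrm{PG}(k-1,q)$, $q^h=3$. As in the proof of Theorem~\ref{cor: divisibility}, part~\ref{part: mainthm 1} of Theorem~\ref{mainthm} implies that the quotient at any $x\in\mathcal{X}$ has two properties. First, it is length-maximal in dimension one lower with parameter $t-1$. Second, it still spans, since the quotient of a spanning set spans. Applying this for $k\ge 8$ repeatedly gives a length-maximal set in $\mathrm{PG}(6,3)$, so it suffices to rule out $k=7$.

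\emph{The case $k=7$.} Quotienting three times from $k=7$ gives a length-maximal set in $\mathrm{PG}(3,3)$ with parameter $t-3$. By Theorem~\ref{lem: k=4 maximal arc DNE} (valid since $3>2$), $t-3=q^h+1=4$, so $t=7$. Then $|\mathcal{X}|=(t-k+2)q^h+t=2\cdot 3+7=13$. Thus $\mathcal{X}$ consists of $13$ points of $\mathrm{PG}(6,3)$, and part~\ref{part: mainthm 1} of Theorem~\ref{mainthm} says that any $6$ of them are linearly independent.

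Let $C$ be the $[13,7]_3$ code whose generator matrix has these points as columns. No $6$ columns being dependent means the dual code $C^\perp$, a $[13,6]_3$ code, has minimum distance at least $7$. The Griesmer bound for a $[n,6,7]_3$ code gives
$$n\ \ge\ \sum_{i=0}^{5}\left\lceil 7/3^i\right\rceil = 7+3+1+1+1+1 = 14 > 13,$$
a contradiction. Hence no length-maximal set exists for $k=7$, and by the quotient induction none exists for $k\ge 7$.

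\emph{Checks and main risk.} The step needing most care is the first bullet of the quotient step: the quotient must genuinely be length-maximal and spanning, so that Theorem~\ref{lem: k=4 maximal arc DNE} and the inductive reduction apply. This is exactly the argument already used in Theorem~\ref{cor: divisibility}. I would also confirm that the dual code has dimension exactly $6$. This holds because $\mathcal{X}$ spans $\mathrm{PG}(6,3)$, so $C$ has dimension $7$. The Griesmer computation itself is routine.
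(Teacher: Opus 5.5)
Your proof is correct. Up to the point where you have $13$ points of $\mathrm{PG}(6,3)$ with any $6$ of them independent, it coincides with the paper's argument: reduce to $(q,h)=(3,1)$, then quotient down to $k=4$ to force $t=k=7$ and $n=13$.

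You obtain the final contradiction by a different route. The paper uses both parts of Theorem~\ref{mainthm}. Each $6$-subset spans a unique hyperplane, which by part~\ref{part: mainthm 2} must be a $7$-secant, and double counting then gives $\binom{13}{6}/\binom{7}{6}=1716/7$ such hyperplanes, which is not an integer. You use only part~\ref{part: mainthm 1} and the value $n=13$: the $6$-wise independence makes $C^\perp$ a $[13,6,\ge 7]_3$ code, and this violates the Griesmer bound.

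The trade-off is as follows. The paper's argument is self-contained elementary counting. Yours imports the Griesmer bound, but it needs no information about secant types, and it works uniformly in $k$. For any $k\ge 7$ the quotient argument gives $t=k$ and $n=k+6$, so $C^\perp$ is a $[k+6,6,\ge k]_3$ code. Since
$$\sum_{i=0}^{5}\lceil k/3^i\rceil \ge k+3+4>k+6,$$
all $k\ge 7$ are excluded at once, with no induction from $k=7$. The same bound is met with equality for $k=5$ and $k=6$. In those cases $C^\perp$ is the perfect ternary Golay code $[11,6,5]_3$ and the extended Golay code $[12,6,6]_3$ from Example~\ref{ex: golay}. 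Your approach therefore also explains why $k=6$ is exactly the threshold.
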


	\begin{proof} From Theorem \ref{lem: k=5 maximal arc DNE} it suffices to consider $(q,h)=(3,1)$. 
		Iterating the quotient construction down to dimension $5$ and applying Theorem~\ref{lem: k=4 maximal arc DNE}, any length-maximal set with $(q,h)=(3,1)$ and $k\ge5$ has $t=k$ and $|\mathcal{X}|=(t-k+2)\cdot3+t=k+6$. Suppose such a set $\mathcal{X}$ exists with $k=7$, so that $t=7$ and $n=|\mathcal{X}|=13$ in $\mathrm{PG}(6,3)$. By part~\ref{part: mainthm 1} of Theorem~\ref{mainthm}, any six points of $\mathcal{X}$ span a unique hyperplane. By part~\ref{part: mainthm 2} of Theorem~\ref{mainthm} such a hyperplane must be a $t$-secant. Counting the pairs $(S,H)$ in which $S$ is a six-element subset of $\mathcal{X}$ and $H$ is the hyperplane spanned by $S$ gives
		$$
		\binom{t}{6}\,N(7,1,7,7) = \binom{n}{6}, \qquad\text{so}\qquad N(7,1,7,7)=\frac{\binom{13}{6}}{\binom{7}{6}}=\frac{1716}{7},
		$$
		which is not an integer. This rules out $k=7$, and the cases $k\ge8$ are ruled out inductively.
	\end{proof}

		\begin{remark}\label{rem: AZ conjectures}
		Restricting to the case $h=1$, let $\kappa(s,q)$ denote, as in~\cite{AZ}, the largest $k$ for which a length-maximal linear code of Singleton defect $s$ over $\mathbb{F}_q$ exists.
		Conjecture~5.1 of~\cite{AZ} asserts that $\kappa(s,q)\le4$ for $q>3$, so Theorem~\ref{lem: k=5 maximal arc DNE}, together with the quotient induction in Remark~\ref{rem: induction}, establishes it. Furthermore, Theorem~\ref{lem: k=4 maximal arc DNE} shows that a length-maximal set with $k=4$ has $t=q+1$, that is $s=q-2$, while Example~\ref{ex: ovoid} shows that such sets exist for every $q$. Hence
		$
		\kappa(q-2,q)=4 $ $ (\text{for }q>3),
		$
		which is the third of the three cases of Conjecture~5.2 of~\cite{AZ}.
	\end{remark}
	
	\section{The associated strongly regular graphs}\label{sec: srg}

	The classical correspondence among projective two-weight codes, two-intersection sets and strongly regular graphs is well known; see Delsarte~\cite{Del} and Calderbank and Kantor~\cite{CK}. In this section we interpret the preceding results from the graph-theoretic side.

	Let $\mathcal{X}$ be a length-maximal set of $(h-1)$-dimensional subspaces of $\mathrm{PG}(kh-1,q)$ with $k\ge3$ and $n=|\mathcal{X}|$. By part~\ref{part: mainthm 1} of Theorem~\ref{mainthm} the elements of $\mathcal{X}$ are pairwise disjoint, so the $h$-dimensional $\mathbb{F}_q$-subspaces $V_1,\ldots,V_n$ of $\mathbb{F}_q^{kh}$ that they determine meet pairwise in the zero vector. Put
	$$
	D=\Bigl(\bigcup_{i=1}^{n} V_i\Bigr)\setminus\{0\},\qquad |D|=n(q^h-1),
	$$
	and let $\Gamma(\mathcal{X})$ denote the Cayley graph on $(\mathbb{F}_q^{kh},+)$ with connection set $D$. Thus $\Gamma(\mathcal{X})$ has $q^{kh}$ vertices and is regular of degree $n(q^h-1)$.

	Let $S$ denote the set of points of $\mathrm{PG}(kh-1,q)$ covered by the members of $\mathcal{X}$. Since these members are pairwise disjoint,
	$$
	|S|=N:=n\,\frac{q^h-1}{q-1},
	$$
	and $S$ spans $\mathrm{PG}(kh-1,q)$ because $\mathcal{X}$ does. Thus $S$ is a spanning set of $N$ points of $\mathrm{PG}(kh-1,q)$, giving rise to a projective $[N,kh]_q$ code in the sense of Calderbank and Kantor~\cite[Section~2]{CK}, and $D=\{v\in\mathbb{F}_q^{kh}\setminus\{0\}:\langle v\rangle\in S\}$. Consequently $\Gamma(\mathcal{X})$ is the graph that~\cite[Section~3]{CK} associates with $S$, and the results of that section apply verbatim.

	\begin{prop}\label{prop: srg}
		Let $\mathcal{X}$ be a length-maximal set of $(h-1)$-dimensional subspaces of $\mathrm{PG}(kh-1,q)$ with $k\ge3$, and let $n=|\mathcal{X}|$. If $H$ is an $s$-secant hyperplane of $\mathcal{X}$ then the eigenvalue of $\Gamma(\mathcal{X})$ afforded by the corresponding character is
		$$
		\theta_H=sq^h-n.
		$$
		Consequently $\Gamma(\mathcal{X})$ is strongly regular if and only if $\mathcal{X}$ has exactly two secant types $s_1<s_2$, in which case its parameters are
		$$
		v=q^{kh},\qquad K=n(q^h-1),\qquad \lambda=K+r+\sigma+r\sigma,\qquad \mu=K+r\sigma,
		$$
		where $r=s_2q^h-n$ and $\sigma=s_1q^h-n$ are the restricted eigenvalues.
	\end{prop}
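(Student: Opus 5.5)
The plan is to compute the spectrum of the Cayley graph $\Gamma(\mathcal{X})$ with additive characters, and then apply the standard criterion that a connected regular graph with exactly three distinct eigenvalues is strongly regular.

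The characters of $(\mathbb{F}_q^{kh},+)$ are $\chi_a(v)=\zeta^{\mathrm{Tr}(a\cdot v)}$ for $a\in\mathbb{F}_q^{kh}$, where $\zeta$ is a primitive $p$-th root of unity. The eigenvalue of $\Gamma(\mathcal{X})$ afforded by $\chi_a$ is $\sum_{v\in D}\chi_a(v)$. For $a=0$ this sum is the degree $K=n(q^h-1)$. For $a\neq 0$ let $H=a^\perp$, a hyperplane. Since the $V_i$ meet pairwise only in $0$, the sum splits as
$$
\sum_{i=1}^n\Bigl(\sum_{v\in V_i}\chi_a(v)-1\Bigr).
$$
The restriction of $\chi_a$ to the $\mathbb{F}_q$-subspace $V_i$ is trivial exactly when $V_i\subseteq H$, that is when the element $x_i$ lies in $H$. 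In that case the inner sum is $q^h$; otherwise character orthogonality gives $0$. So if $H$ is an $s$-secant, the eigenvalue is $s\cdot q^h-n$, which is $\theta_H=sq^h-n$. All $q-1$ nonzero multiples of $a$ give the same $H$, and this is harmless.

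For the criterion, the nontrivial eigenvalues are the values $sq^h-n$ as $s$ ranges over the secant types that actually occur. The map $s\mapsto sq^h-n$ is injective. Also $\theta_H\neq K$, because $sq^h-n<n(q^h-1)$ whenever $s<n$, and $s<n$ holds because $\mathcal{X}$ spans the space. Connectivity follows since $D$ spans $\mathbb{F}_q^{kh}$. Hence $\Gamma$ has exactly $1+m$ distinct eigenvalues, where $m$ is the number of occurring secant types. A connected regular graph is strongly regular if and only if it has exactly three distinct eigenvalues. Since $m\ge 1$, strong regularity holds if and only if $m=2$. The case $m=1$ would make $\Gamma$ complete with one repeated eigenvalue. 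We can either count this as not strongly regular by convention or exclude it; Lemma~\ref{lem: single weight bound k=3} shows it can occur, namely $t=q^h+1$ with $k=3$, and then $\Gamma$ is complete multipartite, not complete. So I will check that case separately and adopt the usual convention that complete and empty graphs are excluded.

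For the parameters, take restricted eigenvalues $r>\sigma$. For a strongly regular graph they are the roots of $x^2-(\lambda-\mu)x-(K-\mu)=0$. This gives $r\sigma=\mu-K$ and $r+\sigma=\lambda-\mu$, so $\mu=K+r\sigma$ and $\lambda=\mu+r+\sigma=K+r+\sigma+r\sigma$, as claimed. Equivalently, this is the formula of Calderbank and Kantor~\cite[Section~3]{CK}, applied to the point set $S$ whose hyperplane intersection sizes are $s\frac{q^h-1}{q-1}$.

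The main subtlety is the degenerate single-type case and making the strongly-regular convention explicit. The character computation itself is routine.
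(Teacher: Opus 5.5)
Your main argument is correct, but it follows a different route from the paper. The paper computes no character sums. It passes to the point set $S$ and computes $|S\cap H|=s\frac{q^h-1}{q-1}+(n-s)\frac{q^{h-1}-1}{q-1}$, hence the weight $w_H=(n-s)q^{h-1}$ of the corresponding codeword of the projective $[N,kh]_q$ code. It then quotes Calderbank--Kantor \cite{CK}:
\begin{itemize}
\item Lemma~3.4 for $\theta_H=K-qw_H$;
\item Theorem~3.2 for ``strongly regular iff $S$ is a two-intersection set'', after setting aside $S=\mathrm{PG}(kh-1,q)$;
\item Corollary~3.7 for $\lambda$ and $\mu$.
\end{itemize}
You instead use that each $V_i$ is an $\mathbb{F}_q$-subspace, so orthogonality of $\chi_a$ restricted to $V_i$ gives $q^h$ or $0$ immediately. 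You also replace Theorem~3.2 by the spectral characterisation of strongly regular graphs, and supply connectivity, $\theta_H\neq K$ (from $s<n$), and the relations $r+\sigma=\lambda-\mu$, $r\sigma=\mu-K$ yourself. Your version is self-contained and shows transparently why $\Gamma(\mathcal{X})$ has exactly $1+m$ distinct eigenvalues. The paper's version places $\Gamma(\mathcal{X})$ in the standard two-weight-code dictionary and yields the code weights as a by-product. You should add one line showing that $\mathrm{Tr}(a\cdot v)=0$ for all $v\in V_i$ forces $a\cdot v=0$ on $V_i$: otherwise $\mathbb{F}_q$-scaling makes $a\cdot v$ run over all of $\mathbb{F}_q$.

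Your aside on the degenerate case is wrong, however. For $k=3$ and $t=q^h+1$ we have $n=q^{2h}+q^h+1=\frac{q^{3h}-1}{q^h-1}$ pairwise disjoint $(h-1)$-flats, i.e.\ a spread. So $|D|=q^{3h}-1$, and $\Gamma(\mathcal{X})$ is the complete graph on $q^{3h}$ vertices; its only nontrivial eigenvalue is $(q^h+1)q^h-n=-1$. Your claim that it is ``complete multipartite, not complete'' contradicts your own correct remark in the preceding sentence. Worse, if it were true it would refute the ``only if'' direction of the statement. A complete multipartite graph with $c\ge 2$ parts of size $a\ge 2$ has the three distinct eigenvalues $(c-1)a$, $0$, $-a$, so it would be strongly regular with a single secant type. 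Delete the aside. When only one secant type occurs the graph is complete, and it is excluded by the convention that strongly regular graphs are not complete. This is exactly how the paper disposes of the case $S=\mathrm{PG}(kh-1,q)$.
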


	\begin{proof}
		Let $H$ be an $s$-secant hyperplane of $\mathcal{X}$. Each of the $n-s$ members of $\mathcal{X}$ not contained in $H$ meets $H$ in a hyperplane of itself, so
		$$
		|S\cap H|=s\,\frac{q^h-1}{q-1}+(n-s)\,\frac{q^{h-1}-1}{q-1},
		$$
		and the codeword of the projective code determined by $H$ has weight
		$$
		w_H=|S|-|S\cap H|=(n-s)q^{h-1}.
		$$
		By~\cite[Lemma~3.4]{CK} the eigenvalue of $\Gamma(\mathcal{X})$ afforded by the corresponding character is $K-qw_H$, where $K=N(q-1)=n(q^h-1)$ is the valency; hence
		$$
		\theta_H=n(q^h-1)-q(n-s)q^{h-1}=sq^h-n,
		$$
		as claimed. The map $s\mapsto sq^h-n$ is injective, so the restricted eigenvalues of $\Gamma(\mathcal{X})$ correspond bijectively to the secant types of $\mathcal{X}$.

		If $S$ is the whole of $\mathrm{PG}(kh-1,q)$ then $D=\mathbb{F}_q^{kh}\setminus\{0\}$, the graph $\Gamma(\mathcal{X})$ is complete and there is a single secant type. Otherwise $S$ is a proper, non-empty, spanning set of points, and by~\cite[Theorem~3.2]{CK} the graph $\Gamma(\mathcal{X})$ is strongly regular if and only if $S$ is a two-intersection set, that is, if and only if $\mathcal{X}$ has exactly two secant types $s_1<s_2$. In that case the associated code is a projective two-weight code with weights $w_1=(n-s_2)q^{h-1}$ and $w_2=(n-s_1)q^{h-1}$ by~\cite[Theorem~3.1]{CK}, and the parameters of $\Gamma(\mathcal{X})$ given in~\cite[Corollary~3.7]{CK} are, in terms of the restricted eigenvalues $r=K-qw_1=s_2q^h-n$ and $\sigma=K-qw_2=s_1q^h-n$, precisely $\mu=K+r\sigma$ and $\lambda=K+r+\sigma+r\sigma$.
	\end{proof}

	\begin{cor}\label{cor: srg list}
		Let $\mathcal{X}$ be a length-maximal set of $(h-1)$-dimensional subspaces of $\mathrm{PG}(kh-1,q)$.
		\begin{enumerate}
			\item If $k=3$ and $t<q^h+1$ then $\Gamma(\mathcal{X})$ is strongly regular with parameters
			$$
			\bigl(q^{3h},\ n(q^h-1),\ n(t-2)+q^h-t,\ n(t-1)\bigr),\qquad n=(t-1)q^h+t,
			$$
			the restricted eigenvalues being $r=q^h-t$ and $\sigma=-n$. \label{part: srg k=3}

			\item If $k=4$ and $q^h>2$ then $\Gamma(\mathcal{X})$ is strongly regular with parameters
			$$
			\bigl(q^{4h},\ (q^{2h}+1)(q^h-1),\ q^h-2,\ q^h(q^h-1)\bigr),
			$$
			the restricted eigenvalues being $r=q^h-1$ and $\sigma=q^h-q^{2h}-1$. \label{part: srg k=4}

			\item If $k\ge5$ and $q^h>3$ then no such $\mathcal{X}$ exists. For $(q,h)=(3,1)$, length-maximal sets exist precisely for $k\in\{5,6\}$: for $k=5$ the graph $\Gamma(\mathcal{X})$ is strongly regular with parameters $(243,22,1,2)$; for $k=6$ the secant types are $0$, $3$ and $6$, so $\Gamma(\mathcal{X})$ is not strongly regular. \label{part: srg k>=5}
		\end{enumerate}
	\end{cor}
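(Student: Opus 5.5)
The plan is to apply Proposition~\ref{prop: srg} in each case. For each $k$ we identify the exact set of secant types that occur, check that there are exactly two of them, and substitute $r=s_2q^h-n$, $\sigma=s_1q^h-n$ and $K=n(q^h-1)$ into $\lambda=K+r+\sigma+r\sigma$ and $\mu=K+r\sigma$. The remaining work is algebraic simplification.

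\textbf{Case $k=3$, $t<q^h+1$.} By part~\ref{part: mainthm 2} of Theorem~\ref{mainthm}, every hyperplane is a $0$- or $t$-secant. Both types occur: Lemma~\ref{lem: single weight bound k=3} gives a $0$-secant, and every element of $\mathcal{X}$ lies in some hyperplane, which must then be a $t$-secant. So $s_1=0$ and $s_2=t$, giving $\sigma=-n$ and $r=tq^h-n=tq^h-(t-1)q^h-t=q^h-t$. Then
\[
\mu=K-nr=n(q^h-1)-n(q^h-t)=n(t-1),
\]
and
\[
\lambda=\mu+r+\sigma=n(t-1)+q^h-t-n=n(t-2)+q^h-t.
\]
These are the claimed parameters.

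\textbf{Case $k=4$, $q^h>2$.} Theorem~\ref{lem: k=4 maximal arc DNE} gives $t=q^h+1$, with all hyperplanes $1$- or $(q^h+1)$-secants. Here $n=(t-2)q^h+t=q^{2h}+1$. Both types occur: any three elements span a hyperplane containing $t$ elements, and the count $N(t,h,4,1)$ from part~\ref{part 4: lem: enumerating secants} of Lemma~\ref{lem: enumerating secants} is positive for $t-1=q^h$. The eigenvalues are $r=(q^h+1)q^h-q^{2h}-1=q^h-1$ and $\sigma=q^h-q^{2h}-1$. Then
\[
\mu=(q^{2h}+1)(q^h-1)+(q^h-1)(q^h-q^{2h}-1)=(q^h-1)q^h,
\]
and
\[
\lambda=\mu+r+\sigma=q^{2h}-q^h+q^h-1+q^h-q^{2h}-1=q^h-2.
\]
This matches the statement.

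\textbf{Case $k\ge5$.} Nonexistence for $q^h>3$ follows from Theorem~\ref{lem: k=5 maximal arc DNE} and Remark~\ref{rem: induction}. For $(q,h)=(3,1)$, Proposition~\ref{prop: q3 k7} rules out $k\ge7$, and Example~\ref{ex: golay} gives existence for $k=5,6$.

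For $k=5$ we need the secant types of every length-maximal set, not only the Golay example. The proof of Theorem~\ref{lem: k=5 maximal arc DNE} gives $t=5$, $n=11$ and $\delta=0$, so $N(5,1,5,0)=N(5,1,5,1)=0$, and the only types are $2$ and $5$. Hence $r=15-11=4$, $\sigma=6-11=-5$ and $K=22$. Then $\mu=22-20=2$ and $\lambda=2+4-5=1$, giving the parameters $(243,22,1,2)$.

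For $k=6$ we have $t=6$ and $n=12$. Example~\ref{ex: golay} shows that types $0$, $3$ and $6$ occur for the Golay example. For an arbitrary set I would argue via the quotient: it is length-maximal with $k=5$ and so has types $2$ and $5$ only. Hence no hyperplane through an element of $\mathcal{X}$ is a $1$- or $2$-secant, since it would give a $0$- or $1$-secant of the quotient. So the possible types are $0$, $3$ and $6$.

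To show all three occur, I would count incidences. Let $a_0,a_3,a_6$ be the numbers of hyperplanes of each type. Then $a_0+a_3+a_6=364$, and counting incident pairs $(x,H)$ gives $3a_3+6a_6=12\cdot121$. The count of $6$-secants through $5$-subsets gives $a_6=\binom{12}{5}/\binom{6}{5}=132$. Substituting yields $a_3=220$ and $a_0=12$, all nonzero. With three secant types, Proposition~\ref{prop: srg} says the graph is not strongly regular.

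The main obstacle is this last point: making sure the three-type and two-type claims hold for every length-maximal set, not just the Golay examples. The explicit counts above should settle it.
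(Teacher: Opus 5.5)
Your proposal is correct and follows the paper's proof. In each case you determine the secant types, check there are two (or three), and substitute into Proposition~\ref{prop: srg}, with the same algebra for $k=3$ and $k=4$.

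The only difference is for $(q,h)=(3,1)$, $k\in\{5,6\}$. The paper reads the secant types off the Golay examples of Example~\ref{ex: golay}. You instead derive them for an arbitrary length-maximal set: from $\delta=0$ in the proof of Theorem~\ref{lem: k=5 maximal arc DNE} for $k=5$, and from the quotient together with the counts $a_6=132$, $a_3=220$, $a_0=12$ for $k=6$. This is slightly more complete, since the statement concerns every such $\mathcal{X}$ and not just those examples.
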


	\begin{proof}
		In each case we determine the secant types and appeal to Proposition~\ref{prop: srg}. For $k=3$, part~\ref{part: mainthm 2} of Theorem~\ref{mainthm} gives the types $0$ and $t$, and by Lemma~\ref{lem: single weight bound k=3} both occur when $t<q^h+1$; substituting $s_1=0$ and $s_2=t$ gives $r=tq^h-n=q^h-t$ and $\sigma=-n$, whence $\mu=K+r\sigma=n(t-1)$ and $\lambda=\mu+r+\sigma=n(t-2)+q^h-t$. For $k=4$ and $q^h>2$, Theorem~\ref{lem: k=4 maximal arc DNE} gives $t=q^h+1$, $n=q^{2h}+1$ and the two secant types $1$ and $t$; substituting yields $r=q^h-1$, $\sigma=q^h-q^{2h}-1$, $\mu=q^h(q^h-1)$ and $\lambda=q^h-2$. For $k\ge5$ the existence statement is Theorem~\ref{lem: k=5 maximal arc DNE} together with Proposition~\ref{prop: q3 k7}, and the secant types for $k=5$ and $k=6$ are read off from Example~\ref{ex: golay}: for $k=5$ they are $2$ and $5$, so that $n=11$, $K=22$, $r=4$ and $\sigma=-5$, giving $(243,22,1,2)$; for $k=6$ there are three types.
	\end{proof}

\begin{remark}\label{rem: srg identification}
		For $h=1$ the graphs of part~\ref{part: srg k=3} of Corollary~\ref{cor: srg list} are the strongly regular graphs classically associated with maximal arcs of $\mathrm{PG}(2,q)$; a hyperoval of $\mathrm{PG}(2,4)$, for instance, gives $\mathrm{SRG}(64,18,2,6)$. For $h=1$ the sets of part~\ref{part: srg k=4} are the ovoids of $\mathrm{PG}(3,q)$, and for an elliptic quadric the graph is the affine polar graph $\mathrm{VO}^-(4,q)$; inequivalent ovoids may give non-isomorphic graphs with the same parameters, cf.~\cite{CK}. For $h>1$, the examples in part~\ref{part: srg k=4} furnished by Example~\ref{ex: ovoid} arise by restriction of scalars from ovoids of $\mathrm{PG}(3,q^h)$. The graph with parameters $(243,22,1,2)$ in part~\ref{part: srg k>=5} is the Berlekamp--van Lint--Seidel graph~\cite{BvLS}, the coset graph of the perfect ternary Golay code.
	\end{remark}

	\begin{remark}
		When $k=3$ and $t=q^h+1$ every hyperplane is a $t$-secant by Lemma~\ref{lem: single weight bound k=3}, so $D$ is all of $\mathbb{F}_q^{3h}\setminus\{0\}$ and $\Gamma(\mathcal{X})$ is complete; this is the reason for the hypothesis $t<q^h+1$ in part~\ref{part: srg k=3} of Corollary~\ref{cor: srg list}. With this boundary case set aside, Corollary~\ref{cor: srg list} gives the complete list: the associated graph is non-trivially strongly regular when $k=3$ and $t<q^h+1$, when $k=4$ and $q^h>2$, and in the single exceptional case $k=5$, $(q,h)=(3,1)$. In the case $k=4$ it is Theorem~\ref{lem: k=4 maximal arc DNE} --- specifically, the exclusion of $0$-secants --- that reduces the possible secant types from three to two.
	\end{remark}

	\section{Remarks on small cases and open questions}\label{sec: remarks}
	
	For $k=2$, a length-maximal set is a set of $(h-1)$-dimensional subspaces of $\mathrm{PG}(2h-1,q)$ such that every hyperplane contains at most $t$ elements, with $|\mathcal{X}|=tq^h+t = t(q^h+1)$. This corresponds to a partition of $\mathrm{PG}(2h-1,q)$ into $(h-1)$-flats (a spread), which exists for all $q$ and $h$.
	
	For $k=3$, length-maximal sets correspond to the higher-dimensional analogues of maximal arcs. The field reduction of a maximal arc in $\mathrm{PG}(2,q^h)$ yields an example for every parameter $t \mid q^h$ and all $q$ even. Mathon's example \cite{DDHM2002} provides an instance with $k=3$, $h=2$, $q=3$, $t=3$ in odd characteristic, and to our knowledge is the only such example in odd characteristic. Whether this example is unique, and whether further examples exist in odd characteristic for larger parameters, are questions that remain open.

	\begin{example}\label{ex: ovoid}
		For $k=4$ the bound of Theorem~\ref{lem: k=4 maximal arc DNE} is attained for every prime power $q$ and every $h\ge1$, by field reduction of an ovoid. Let  $\mathcal{O}$ be an ovoid of $\mathrm{PG}(3,q^h)$: a set of $q^{2h}+1$ points meeting every plane in $1$ or $q^h+1$ points. Elliptic quadrics are ovoids and exist for every $q^h$. Regard $\mathbb{F}_q^{4h}$ as a $4h$-dimensional vector space over $\mathbb{F}_q$, and to each point $\langle v\rangle_{\mathbb{F}_{q^h}}$ of $\mathrm{PG}(3,q^h)$ associate the $(h-1)$-flat $\mathrm{PG}_{\mathbb{F}_q}(\mathbb{F}_{q^h} v)$ of $\mathrm{PG}(4h-1,q)$. Set $
		\mathcal{X} = \{\,\mathrm{PG}_{\mathbb{F}_q}(\mathbb{F}_{q^h} v) : \langle v\rangle_{\mathbb{F}_{q^h}}\in\mathcal{O}\,\}.
		$
		An $\mathbb{F}_q$-hyperplane of $\mathbb{F}_q^{4h}$ has the form $\ker\bigl(x\mapsto\mathrm{Tr}_{\mathbb{F}_{q^h}/\mathbb{F}_q}(a\cdot x)\bigr)$ for some nonzero $a\in\mathbb{F}_q^{4h}$, and by nondegeneracy of the trace form $\mathrm{PG}_{\mathbb{F}_q}(\mathbb{F}_{q^h} v)$ is contained in it if and only if $a\cdot v=0$. Thus the $\mathbb{F}_q$-hyperplanes correspond to the planes $a\cdot X=0$ of $\mathrm{PG}(3,q^h)$, an element of $\mathcal{X}$ lying in a hyperplane exactly when the corresponding point lies in the corresponding plane. Hence every hyperplane meets $\mathcal{X}$ in $1$ or $q^h+1$ members, so $t=q^h+1$ and
		$$
		|\mathcal{X}| = q^{2h}+1 = (t-2)q^h+t.
		$$
		Therefore $\mathcal{X}$ is length-maximal, with every hyperplane a $1$-secant or a $t$-secant, and the associated two-weight additive code has weights $q^{2h}$ and $q^{2h}-q^h$. When $h=1$ these length-maximal sets are precisely the ovoids of $\mathrm{PG}(3,q)$.
	\end{example}

	The results of Section~\ref{sec: k ge 4} are constrained to $q^h>2$, and Theorem~\ref{lem: k=5 maximal arc DNE} requires $q^h>3$. In particular the binary case $q=2$ is excluded only when $h=1$, so these results do apply to $\mathbb{F}_2$-linear codes over $\mathbb{F}_{2^h}$ for every $h\ge2$. For instance, taking $q=2$ and $h=2$, Theorem~\ref{lem: k=4 maximal arc DNE} shows that a length-maximal set in $\mathrm{PG}(7,2)$ has $t=5$ and consists of $17$ lines, as realised by Example~\ref{ex: ovoid} from an ovoid of $\mathrm{PG}(3,4)$, while Theorem~\ref{lem: k=5 maximal arc DNE} shows that none exists in $\mathrm{PG}(9,2)$.  For   $(q,h)=(2,1)$, the trivial binary $[k+1,k,2]$ MDS codes, the frames $\{\langle e_1\rangle,\ldots,\langle e_k\rangle,\langle e_1+\cdots+e_k\rangle\}$ of $\mathrm{PG}(k-1,2)$, with $t=k-1$, provide length-maximal sets for every $k$. The existence question for $(q,h)=(2,1)$ is completely settled by results in the literature. A bound of Faldum and Willems in~\cite{AZ}, gives $t \le q+k-2$ for every length-maximal set with $h=1$ and $k\ge3$, so for $q=2$ only $t=k-1$ and $t=k$ can occur. The MDS examples above realise $t=k-1$ for every $k$, and Alderson and Zhang~\cite{AZ} show that length-maximal sets with $t=k$ exist precisely for $k\le4$. For $k=4$ such a set is the complement of a plane in $\mathrm{PG}(3,2)$, an $8$-cap, with secant types $0$ and $4$.

	In the language of additive codes, our results assert that additive $[n,k,d]_{q}^h$ codes meeting the maximal arc bound do not exist for $k \geq 5$ when $q^h > 3$. The case $q^h=3$, namely $(q,h)=(3,1)$, is exceptional in that the ternary Golay code yields length-maximal sets for $k=5$ and $k=6$, while none exist for $k\ge7$ (Example~\ref{ex: golay} and Proposition~\ref{prop: q3 k7}). For $k=4$ and $q^h > 2$, any such code must satisfy $t=q^h+1$ and have exactly two nonzero weights $n-1$ and $n-t$; such codes exist for all $q$ and $h$, arising by field reduction of an ovoid of $\mathrm{PG}(3,q^h)$ (Example~\ref{ex: ovoid}).

	\section*{Acknowledgements}

	We thank Adri\'an Z\'ame\v{c}n\'ik, V\'aclav Rozho\v{n} and Robert \v{S}\'amal of Charles University for sending us a note generated using \texttt{bolzano.app} that drew our attention to the construction in Example~\ref{ex: ovoid}. We also thank the referee for a careful reading and for several suggestions which improved the paper, in particular for prompting the discussion of Section~\ref{sec: srg}.

	\bibliographystyle{plain}

\begin{thebibliography}{}
		
		\bibitem{AZ}
		T.L. Alderson and Z. Zhang, Projective systems and bounds on the length of codes of non-zero defect, {\it J. Algebra Combin. Discrete Struct. Appl.}, to appear. arXiv:2504.19325.
		
		\bibitem{BBM}
		S. Ball, A. Blokhuis and F. Mazzocca, Maximal arcs in Desarguesian planes of odd order do not exist, {\it Combinatorica}, {\bf 17} (1997) 31--41.
		
		\bibitem{BB}
		S. Ball and A. Blokhuis, An easier proof of the maximal arcs conjecture, {\it Proc. Amer. Math. Soc.}, {\bf 126} (1998) 3377--3380.
		
		\bibitem{BvLS}
		E.R.\ Berlekamp, J.H.\ van Lint and J.J.\ Seidel, A strongly regular graph derived from the perfect ternary Golay code, in: {\it A Survey of Combinatorial Theory}, North-Holland, Amsterdam, 1973, pp.\ 25--30.

		\bibitem{MR704102}
		A.A. Bruen and R. Silverman, On the nonexistence of certain M.D.S. codes and projective planes, {\it Math. Z.}, {\bf 183} (1983) 171--175.
		
		\bibitem{CK}
		R.\ Calderbank and W.M.\ Kantor, The geometry of two-weight codes, {\it Bull. London Math. Soc.}, {\bf 18} (1986) 97--122.

		\bibitem{cos}
		A.\ Cossu, Su alcune propriet\`{a} dei $\{k;n\}$-archi di un piano proiettivo sopra un corpo finito, {\it Rend. Mat. e Appl.}, {\bf 20} (1961) 271--277.
		
		\bibitem{DDHM2002}
		F. De Clerck, M. Delanote, N. Hamilton and R. Mathon, Perp-systems and partial geometries, {\it Adv. Geom.}, {\bf 2} (2002) 1--12.
		
		\bibitem{Del}
		P.\ Delsarte, Weights of linear codes and strongly regular normed spaces, {\it Discrete Math.}, {\bf 3} (1972) 47--64.

		\bibitem{den}
		R.H.F.\ Denniston, Some maximal arcs in finite projective planes, {\it J. Combin. Theory}, {\bf 6} (1969) 317--319.
		
		\bibitem{HM2003}
		N. Hamilton and R. Mathon, More maximal arcs in Desarguesian projective planes and their geometric structure, {\it Adv. Geom.}, {\bf 3} (2003) 251--261.
		
		\bibitem{mathoncon}
		R. Mathon, New maximal arcs in Desarguesian planes, {\it J. Combin. Theory Ser. A}, {\bf 97} (2002) 353--368.
		
		\bibitem{tha1}
		J.A.\ Thas, Construction of maximal arcs and partial geometries, {\it Geom. Dedicata}, {\bf 3} (1974) 61--64.
		
		\bibitem{tha}
		J.A.\ Thas, Some results concerning $\{(q+1)(n-1);n\}$-arcs and $\{(q+1)(n-1)+1;n\}$-arcs in finite projective planes of order $q$, {\it J. Combin. Theory Ser. A}, {\bf 19} (1975) 228--232.
		
		\bibitem{tha3}
		J.A.\ Thas, Construction of maximal arcs and dual ovals in translation planes, {\it Europ. J. Combinatorics}, {\bf 1} (1980) 189--192.
		
	\end{thebibliography}

\end{document}